\definecolor{blueish}{HTML}{007F99}
\definecolor{purpleish}{HTML}{72177A}
\newsavebox{\algbox}
\renewcommand{\epsilon}{\varepsilon}
\newtheorem{conj}{Conjecture}
\newtheorem{thm}{Theorem}
\newtheorem{lemma}[conj]{Lemma}
\newtheorem{claim}{Claim}
\newtheorem{defn}[conj]{Definition}
\newcommand{\f}[1]{\boldsymbol{#1}}
\newcommand{\bb}[1]{\mathbb{#1}}
\newcommand{\ca}[1]{\mathcal{#1}}
\newcommand{\s}[1]{\mathsf{#1}}
\newcommand{\remove}[1]{}
\newcommand{\ip}[2]{\langle #1, #2 \rangle}
\title{Lower Bounds on the Total Variation Distance Between Mixtures of Two Gaussians}
\author{Sami Davies\thanks{Northwestern University. \url{sami@northwestern.edu}} 
\and 
{
Arya Mazumdar\thanks{Halicio\v{g}lu Data Science Institute, UC San Diego. \url{arya@ucsd.edu}}}
\and
{Soumyabrata Pal\thanks{Computer Science Department, University of Massachusetts Amherst. \url{soumyabratap@umass.edu}}}
\and
{
Cyrus Rashtchian\thanks{Department of Computer Science \& Engineering, UC San Diego. \url{crashtchian@eng.ucsd.edu}}}
}
\begin{document}

\maketitle

\begin{abstract}%
Mixtures of high dimensional Gaussian distributions have been studied extensively in statistics and learning theory. 
While the total variation distance appears naturally in the sample complexity of distribution learning, it is analytically difficult to obtain tight lower bounds for mixtures. Exploiting  a connection between total variation  distance and the  characteristic function of the mixture, we provide fairly tight functional approximations. This enables us to derive new lower bounds on the total variation distance between two-component Gaussian mixtures with a shared covariance matrix.
\end{abstract}


\section{Introduction}

Let $\mathcal{N}(\f{\mu}, \f{\Sigma})$ denote the $d$-dimensional Gaussian distribution with mean $\f{\mu} \in \mathbb{R}^d$ and positive definite covariance matrix $\f{\Sigma} \in \mathbb{R}^{d\times d}$. A $k$-component mixture of $d$-dimensional Gaussian distributions is a distribution of the form $f = \sum_{i=1}^k w_i \cdot \mathcal{N}(\f{\mu}_i, \f{\Sigma}_i).$ Such a mixture is defined by $k$ triples $\{(w_i, \f{\mu}_i, \f{\Sigma}_i)\}_{i=1}^k$, where $w_i \in \mathbb{R}^+$ with $\sum_{i=1}^k w_i = 1$ are the mixing weights, $\f{\mu}_i \in \mathbb{R}^d$ are the means, and $\f{\Sigma}_i \in \mathbb{R}^{d\times d}$ are the covariance matrices. Mixtures of Gaussian distributions have been studied intensively due to their broad applicability to statistical problems~\cite{arora2001learning, dasgupta1999learning, dasgupta2000two,  huber2004robust, kane2020robust,  moitra2018algorithmic, moitra2010settling, pearson1894contributions, titterington1985statistical}.

The variational distance (a.k.a., the total variation (TV) distance) between two distributions $f,f'$ with same sample space $\Omega$ and  sigma algebra $\mathcal{S}$ 
is defined as follows:
\begin{align*}
    \left|\left|f-f'\right|\right|_{\s{TV}} \triangleq   \sup_{\ca{A}\in \mathcal{S} }\Big(f(\ca{A})-f'(\ca{A})\Big).
\end{align*}
The minimum pairwise TV distance of a class of distributions appears naturally in the expressions of statistical error rates  related to the class, most notably in the Neyman-Pearson approach to  hypothesis testing~\cite{lehmann2006testing,neyman2020contributions}, as well as in the sample complexity results in density estimation~\cite{devroye2012combinatorial}. In particular, in these applications, a lower bound on the total variation distance between two candidate distributions is an essential part of the algorithm design and analysis.

Tight bounds are known for the total variation distance between single Gaussians; however, they have only recently been derived as closed form functions of the distribution parameters~\cite{barsov1987estimates, devroye2018total}.  The functional form of the TV distance bound is often much more useful in practice because it can be directly evaluated based on only the means and covariances of the distribution. This has opened up the door for new applications to a variety of areas, such as analyzing ReLU networks~\cite{wu2019learning}, distribution learning~\cite{ashtiani2020near, BakshiDHKKK20}, private distribution testing~\cite{bun2019private, canonne2019private}, and average-case reductions~\cite{brennan2019optimal}.

 Inspired by the wealth of applications for single Gaussian total variation bounds, we investigate deriving analogous results for mixtures with two components. As our main contribution, 
we complement the single Gaussian results and derive  tight lower bounds for 
pairs of mixtures containing two equally weighted Gaussians with shared variance. 
We also present our results in a closed form in terms of the gap between the component means and certain statistics of the covariance matrix. The total variation distance between two distributions can be upper bounded by other distances/divergences (e.g., KL divergence, Hellinger distance) that are easier to analyze.  
In contrast, it is a key challenge to develop ways to lower bound the total variation distance.
The shared variance case is important because it presents some of the key difficulties in parameter estimation and is widely studied~\cite{daskalakis2016ten, wu2020optimal}. For example, mean estimation with shared variance serves as a model for the sensor location estimation problem in wireless or physical networks~\cite{kontkanen2004topics, liu2007survey, van2000asymptotic}. 

The lower bound on total variation distance can be applicable in several contexts. In binary hypothesis testing, it gives a sufficient condition to bound from above the total probability of error of the best test~\cite{moitra2018algorithmic}. Hypothesis testing in Gaussian mixture models has been of interest, cf.~\cite{aitkin1985estimation,chen2009hypothesis}. Furthermore, parameter learning in Gaussian mixture models is a core topic in density estimation~\cite{devroye2012combinatorial}. Our bound can provide a sufficient condition on the {\em learnability} of the class of two component Gaussian mixtures in terms of the precision of parameter recovery and gap between the component of mixtures. Indeed, performance of various density estimation techniques, such as the Scheff\'e estimator or the minimum distance estimator, depends crucially on a computable lower bound in total variation distance between candidate distributions~\cite{devroye2012combinatorial}. Furthermore, our lower bound implies that, for the class of distributions we consider, if a pair of distributions is close in variational distance, then the distributions have close parameters. This type of implication is integral to arguments in outlier-robust moment estimation algorithms and clustering~\cite{BakshiDHKKK20, hopkins2018mixture}.

We obtain lower bounds on the total variation distance by examining the  characteristic function of the mixture. This connection has been previously used in~\cite{krishnamurthy20a} in the context of mixture learning, but it 
required strict assumptions on the mixtures having discrete parameter values, i.e., Gaussians with means that belong to a scaled integer lattice.  It is not clear how to generalize their techniques to non-integer means. As a first step towards that generalization, we analyze unrestricted two-component one-dimensional mixtures by applying a novel and more direct analysis of the characteristic function.
Then, in the high-dimensional setting, we obtain a new TV distance lower bound by projecting and then using our one-dimensional result.
By carefully choosing and analyzing the one-dimensional projection (which depends on the mixtures), we exhibit nearly-tight bounds on the TV distance of $d$-dimensional mixtures for any $d \geq 1$.

\subsection{Results}

Let $\ca{F}$ be the set of all $d$-dimensional, two-component, equally weighted mixtures
$$\ca{F} = \left  \{f_{\f{\mu}_0,\f{\mu}_1} =\frac12 \ca{N}(\f{\mu}_0,\f{\Sigma})+\frac12\ca{N}(\f{\mu}_1,\f{\Sigma}) \mid    \f{\mu}_0,\f{\mu}_1 \in \bb{R}^d, \f{\Sigma }\in \mathbb{R}^{d \times d}\right \},$$
where $\f{\Sigma} \in \bb{R}^{d \times d}$ is a positive definite matrix. 
When $d=1$, we use the notation $f_{\mu_0,\mu_1} \in \mathcal{F}$ and simply denote the variance as $\sigma^2 \in \mathbb{R}$. 
Our main result is the following nearly-tight lower bound on the TV distance between pairs of $d$-dimensional two-component mixtures with shared covariance.

\begin{thm}{\label{thm:main_high_dim_tv}}
For $f_{\f{\mu}_0,\f{\mu}_1},f_{\f{\mu}_0',\f{\mu}_1'} \in \ca{F}$, define sets
$S_1= \{\f{\mu}_1-\f{\mu}_0,\f{\mu}_1'-\f{\mu}_0'\}$, $S_2 = \{\f{\mu}_0'-\f{\mu}_0,\f{\mu}_1'-\f{\mu}_1\}$, $S_3 = \{\f{\mu}_0'-\f{\mu}_1,\f{\mu}_1'-\f{\mu}_0\}$  and vectors 
 $\f{v}_1 = \s{argmax}_{s \in S_1} || \f{s}||_2$, $\f{v}_2 = \s{argmax}_{s \in S_2} || \f{s}||_2$, $\f{v}_3 = \s{argmax}_{s \in S_3} || \f{s}||_2$. 
 Let $\lambda_{\f{\Sigma},\ca{U}} \triangleq \max_{\f{u}:\left|\left|\f{u}\right|\right|_2=1, \f{u} \in \ca{U}} \f{u}^{T}\f{\Sigma}\f{u}$ with $\ca{U}$ being the span of the vectors $\f{v}_1,\f{v}_2,\f{v}_3$.
If $\|\f{v}_1\|_2  \ge \min(\|\f{v}_2\|_2,\|\f{v}_3\|_2)/2$ and $\sqrt{\lambda_{\f{\Sigma},\ca{U}}}=\Omega(\left|\left|\f{v}_1\right|\right|_2)$, then
\[
\left|\left|f_{\f{\mu}_0,\f{\mu}_1}-f_{\f{\mu}_0',\f{\mu}_1'} \right|\right|_{\s{TV}} = \Omega\Big(\min\Big(1, \frac{\|\f{v}_1\|_2\min(\|\f{v}_2\|_2,\|\f{v}_3\|_2)}{\lambda_{\f{\Sigma},\ca{U}}}\Big)\Big),
\]
 and otherwise, we have that
$
\left|\left|f_{\f{\mu}_0,\f{\mu}_1}-f_{\f{\mu}_0',\f{\mu}_1'} \right|\right|_{\s{TV}} = \Omega\Big(\min\Big(1,  \min(\|\f{v}_2\|_2,\|\f{v}_3\|_2)/ \sqrt{\lambda_{\f{\Sigma},\ca{U}}} \Big)\Big).
$
\end{thm}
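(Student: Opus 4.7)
The strategy is to reduce to the one-dimensional case by projecting both mixtures onto a carefully chosen unit vector $\f{u}\in\ca{U}$, and then invoke the one-dimensional TV lower bound established earlier in the paper. For any unit $\f{u}\in\bb{R}^d$, the map $\f{x}\mapsto\ip{\f{u}}{\f{x}}$ pushes forward $f_{\f{\mu}_0,\f{\mu}_1}$ to a $1$-dimensional two-component equally-weighted Gaussian mixture $g_{\f{u}}$ with component means $\ip{\f{u}}{\f{\mu}_0},\ip{\f{u}}{\f{\mu}_1}$ and shared variance $\sigma^{2}:=\f{u}^{T}\f{\Sigma}\f{u}$. By the data processing inequality for total variation distance, $\|f_{\f{\mu}_0,\f{\mu}_1}-f_{\f{\mu}_0',\f{\mu}_1'}\|_{\s{TV}}\ge\|g_{\f{u}}-g'_{\f{u}}\|_{\s{TV}}$, so the remaining task is to exhibit a good projection direction.

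I would pick $\f{u}$ of unit norm in $\ca{U}$ with $|\ip{\f{u}}{\f{v}_i}|\ge c\|\f{v}_i\|_2$ simultaneously for all $i\in\{1,2,3\}$, for some universal constant $c>0$. This is possible because $\dim\ca{U}\le 3$: for a uniform random unit $\f{u}$ on the sphere of $\ca{U}$, each $\ip{\f{u}}{\f{v}_i}^{2}/\|\f{v}_i\|_2^{2}$ is Beta-distributed with mean $1/\dim\ca{U}\ge 1/3$, and a union bound over $i\in\{1,2,3\}$ using the smallness of the Beta CDF near zero yields an event of positive probability on which all three normalized correlations are bounded below by $c^2$. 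Crucially, since $\f{u}\in\ca{U}$, the projected variance automatically satisfies $\sigma^{2}\le\lambda_{\f{\Sigma},\ca{U}}$ by the definition of $\lambda_{\f{\Sigma},\ca{U}}$.

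For this $\f{u}$, the projected 1D argmax quantities $v_i^{\text{1D}}:=\max_{\f{w}\in S_i}|\ip{\f{u}}{\f{w}}|$ satisfy $c\|\f{v}_i\|_2\le v_i^{\text{1D}}\le\|\f{v}_i\|_2$, where the upper bound uses that $\f{v}_i$ is the longest vector in $S_i$. Under Case 1 of the theorem, the analogous 1D Case 1 conditions hold up to constants absorbed into $c$, and the 1D TV lower bound yields $\|g_{\f{u}}-g'_{\f{u}}\|_{\s{TV}}=\Omega\bigl(v_1^{\text{1D}}\min(v_2^{\text{1D}},v_3^{\text{1D}})/\sigma^{2}\bigr)=\Omega\bigl(\|\f{v}_1\|_2\min(\|\f{v}_2\|_2,\|\f{v}_3\|_2)/\lambda_{\f{\Sigma},\ca{U}}\bigr)$. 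Otherwise, the 1D Case 2 bound gives $\Omega\bigl(\min(v_2^{\text{1D}},v_3^{\text{1D}})/\sigma\bigr)=\Omega\bigl(\min(\|\f{v}_2\|_2,\|\f{v}_3\|_2)/\sqrt{\lambda_{\f{\Sigma},\ca{U}}}\bigr)$; in either case the bound is capped at $1$ since TV distance is.

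The main obstacle is the choice of $\f{u}$: finding a single direction in $\ca{U}$ simultaneously aligned (up to a constant factor) with all three $\f{v}_i$ relies essentially on $\dim\ca{U}\le 3$, which is the key geometric input and would fail for an arbitrary family of vectors. A secondary subtlety is that the projected 1D argmax over $S_i$ may select the \emph{other} vector in $S_i$ if it happens to project to a larger magnitude than $\f{v}_i$; but this can only \emph{increase} $v_i^{\text{1D}}$ above $|\ip{\f{u}}{\f{v}_i}|$ and hence only strengthen the derived bound, so it does not interfere with the argument.
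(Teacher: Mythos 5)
Your Case 1 is essentially the paper's own argument: a random direction in the (at most three-dimensional) span of $\f{v}_1,\f{v}_2,\f{v}_3$ that is constant-correlated with all three (the paper's Lemma \ref{lem:exists}, proved with a Gaussian rather than a Beta computation), followed by projection, data processing, and the one-dimensional bound. That part is fine.

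The gap is in your ``otherwise'' case. You write that ``the 1D Case 2 bound gives $\Omega(\min(v_2^{\text{1D}},v_3^{\text{1D}})/\sigma)$,'' but Theorem \ref{thm:main1} only gives that stronger bound when the \emph{projected} configuration is in its second case, i.e.\ when the projected interval of one pair of means is not contained in the other's, or when the projected standard deviation is not $\Omega$ of the projected within-mixture gap. You never verify this, and with your single random direction it can fail exactly in the regime the theorem must cover: suppose $\|\f{v}_1\|_2$ is just below $\min(\|\f{v}_2\|_2,\|\f{v}_3\|_2)/2$ while $\sqrt{\lambda_{\f{\Sigma},\ca{U}}}\gg\|\f{v}_1\|_2$. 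Your direction only guarantees $|\ip{\f{u}}{\f{v}_i}|\ge c\|\f{v}_i\|_2$ for some absolute $c<1$ (the paper's construction gives $c=1/6$), so the factor-$2$ margin in the hypothesis does not survive the projection: one can have $|\ip{\f{u}}{\f{v}_1}|$ larger than $\min(|\ip{\f{u}}{\f{v}_2}|,|\ip{\f{u}}{\f{v}_3}|)$, the projected intervals nested, and the projected variance $\f{u}^T\f{\Sigma}\f{u}$ much larger than the projected $\delta_1$. Then Theorem \ref{thm:main1} only yields the product bound $\Omega\big(\|\f{v}_1\|_2\min(\|\f{v}_2\|_2,\|\f{v}_3\|_2)/\lambda_{\f{\Sigma},\ca{U}}\big)$, which is weaker than the claimed $\min(\|\f{v}_2\|_2,\|\f{v}_3\|_2)/\sqrt{\lambda_{\f{\Sigma},\ca{U}}}$ by the factor $\|\f{v}_1\|_2/\sqrt{\lambda_{\f{\Sigma},\ca{U}}}$, which can be arbitrarily small. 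This is precisely why the paper abandons the random direction in its Case 2 and instead takes the deterministic direction $\f{t}\propto \f{v}_2/\|\f{v}_2\|_2+s\,\f{v}_3/\|\f{v}_3\|_2$ with $s$ chosen so $s\ip{\f{v}_2}{\f{v}_3}\ge 0$: this makes $|\ip{\f{t}}{\f{v}_2}|$ and $|\ip{\f{t}}{\f{v}_3}|$ suffer no shrinkage (they are at least $\|\f{v}_2\|_2$ and $\|\f{v}_3\|_2$ before normalization) while $|\ip{\f{t}}{\f{v}_1}|\le 2\|\f{v}_1\|_2$, so the comparison survives with the constant $2$ and the projected intervals cannot be nested. (The other ``otherwise'' sub-case, $\sqrt{\lambda_{\f{\Sigma},\ca{U}}}=O(\|\f{v}_1\|_2)$, would in fact go through with your random direction since then the projected $\delta_1$ dominates the projected standard deviation, but that too needs to be stated and checked rather than asserted.) To repair your proof, either strengthen the Case 1 hypothesis to $\|\f{v}_1\|_2\ge c'\min(\|\f{v}_2\|_2,\|\f{v}_3\|_2)$ for a large constant $c'$ depending on your correlation constant (which proves a weaker theorem), or adopt the paper's deterministic projection in the remaining case.
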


Notice from the definitions of $\f{v}_1,\f{v}_2,\f{v}_3$ that $\ca{U}$ is contained within the subspace spanned by the unknown mean vectors $\f{\mu}_0,\f{\mu}_1,\f{\mu}_0',\f{\mu}_1'$. Furthermore, $\lambda_{\f{\Sigma},\ca{U}}$ as defined in Theorem \ref{thm:main_high_dim_tv} can always be bounded from above by the largest eigenvalue of the matrix $\f{\Sigma}$, and as we will show in Section \ref{subsection:tight}, this upper bound  characterizes the TV distance between mixtures in several instances.

In some cases, it is simpler to work with $\f{z} = \f{\Sigma}^{-1/2}\f{x}$ instead of the original samples $\f{x}\in \bb{R}^d$. Note that if $\f{x}\sim \frac12 \ca{N}(\f{\mu}_0,\f{\Sigma})+\frac12\ca{N}(\f{\mu}_1,\f{\Sigma})$, then $\f{z}\sim \frac12 \ca{N}(\f{\Sigma}^{-1/2}\f{\mu}_0,\f{I})+\frac12\ca{N}(\f{\Sigma}^{-1/2}\f{\mu}_1,\f{I})$, for $\f{I}$ the $d$-dimensional identity matrix.
Overall, if we scale the distribution by $\f{\Sigma}^{-1/2}$, then by the invariance property of TV distance (see, for instance, Section 5.3 in \cite{devroye2012combinatorial}), Theorem~\ref{thm:main_high_dim_tv} implies the following.
For $f_{\f{\mu}_0,\f{\mu}_1},f_{\f{\mu}_0',\f{\mu}_1'} \in \ca{F}$ and $S_1, S_2, S_3$ as above,
the scaled vectors are
$\f{v}_i = \s{argmax}_{s \in S_i} || \f{\Sigma}^{-1/2}\f{s}||_2$, for $i \in [3]$.
If $\|\f{\Sigma}^{-1/2} \f{v}_1\|_2  \ge \min(\| \f{\Sigma}^{-1/2} \f{v}_2\|_2,\|\f{\Sigma}^{-1/2}\f{v}_3\|_2)/2$ and $||\f{\Sigma}^{-1/2}\f{v}_1||_2=O(1)$, then
\[
\left|\left|f_{\f{\mu}_0,\f{\mu}_1}-f_{\f{\mu}_0',\f{\mu}_1'} \right|\right|_{\s{TV}} = \Omega\Big(\min\Big(1, \|\f{\Sigma}^{-1/2}\f{v}_1\|_{2}\min(\|\f{\Sigma}^{-1/2}\f{v}_2\|_{2},\|\f{\Sigma}^{-1/2}\f{v}_3\|_{2})\Big)\Big),
\]
 and otherwise, 
$
\left |\left|f_{\f{\mu}_0,\f{\mu}_1}-f_{\f{\mu}_0',\f{\mu}_1'} \right|\right|_{\s{TV}} = \Omega\left(\min\left(1,  \min(\|\f{\Sigma}^{-1/2}\f{v}_2\|_{2},\|\f{\Sigma}^{-1/2}\f{v}_3\|_{2}) \right)\right).
$

In  the special case of one component Gaussians, i.e., $\f{\mu}_0= \f{\mu}_1$ and $\f{\mu}_0' =\f{\mu}_1'$, we recover a result by Devroye et al.~(see the lower bound in \cite[Theorem 1.2]{devroye2018total}, setting $\f{\Sigma}_1 = \f{\Sigma}_2$). In the one-dimensional setting, our next theorem shows a novel lower bound on the total variation distance between any two distinct two-component one-dimensional Gaussian mixtures from $\cal{F}$.

 \begin{thm}{\label{thm:main1}}
Without loss of generality, for $f_{\mu_0,\mu_1},f_{\mu_0',\mu_1'} \in \ca{F}$, suppose
$\mu_0\le \min(\mu_1,\mu_0',\mu_1')$ and $\mu_0' \le \mu_1'$.
 Further, let $\delta_1 = \max\{|\mu_0-\mu_1|, |\mu_0'-\mu_1'|\} $ and
$\delta_2 = \max\{|\mu_0'-\mu_0|, |\mu_1-\mu_1'|\} $.
If $[\mu_0',\mu_1'] \subseteq [\mu_0,\mu_1]$ and $\sigma=\Omega(\delta_1)$,
then we have that
$$
    ||f_{\mu_0,\mu_1}-f_{\mu_0',\mu_1'}||_{\s{TV}} \ge \Omega(\min(1, \delta_1\delta_2 /\sigma^2)),
$$
and otherwise,
$
    ||f_{\mu_0,\mu_1}-f_{\mu_0',\mu_1'}||_{\s{TV}} \ge \Omega(\min(1, \delta_2 /\sigma)).
$
\end{thm}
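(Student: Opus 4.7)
The plan is to bound the TV distance from below via the standard Fourier inequality $\|f-f'\|_{\s{TV}}\geq \tfrac12|\phi(t)-\phi'(t)|$, choosing $t\asymp 1/\sigma$ so that the Gaussian factor $e^{-\sigma^2 t^2/2}$ remains bounded below. Using the midpoint/half-width parametrization $m=(\mu_0+\mu_1)/2$, $\Delta=(\mu_1-\mu_0)/2$ (and primed analogues), a direct computation gives
\[
\phi(t)-\phi'(t)=e^{-\sigma^2 t^2/2}\bigl[e^{imt}\cos(\Delta t)-e^{im't}\cos(\Delta' t)\bigr]=:e^{-\sigma^2 t^2/2}\,g(t).
\]
Since $|g|$ is invariant under a common shift of all four means, I may take $m+m'=0$; writing $D=m'-m$, this yields
\[
|\mathrm{Re}\,g(t)|=\bigl|\cos(Dt/2)\bigl[\cos(\Delta t)-\cos(\Delta' t)\bigr]\bigr|,\qquad |\mathrm{Im}\,g(t)|=\bigl|\sin(Dt/2)\bigl[\cos(\Delta t)+\cos(\Delta' t)\bigr]\bigr|.
\]
The bridge between the Fourier side and the parameters in the statement is the identity $\delta_2=|D|+|\Delta-\Delta'|$, which follows from $\mu_0'-\mu_0=D+(\Delta-\Delta')$ and $\mu_1'-\mu_1=D-(\Delta-\Delta')$ together with $\max(|x+y|,|x-y|)=|x|+|y|$.

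\textbf{Nested case.} Take $t=c/\sigma$ with $c$ a small absolute constant. Under $\sigma=\Omega(\delta_1)$ all of $Dt,\Delta t,\Delta' t$ are uniformly small, so I split on whether $|\Delta-\Delta'|\geq \delta_2/2$ or $|D|\geq \delta_2/2$. In the first subcase, $|\cos(\Delta t)-\cos(\Delta' t)|=\Omega(|\Delta^2-\Delta'^2|t^2)$ by Taylor, and $\Delta+\Delta'\geq\Delta\geq\delta_1/2$ with $\cos(Dt/2)\geq 1/2$ yields $|\mathrm{Re}\,g(c/\sigma)|=\Omega(\delta_1\delta_2/\sigma^2)$. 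In the second subcase, $|\sin(Dt/2)|\geq |Dt|/4$ and $\cos(\Delta t)+\cos(\Delta' t)\geq 1$ give $|\mathrm{Im}\,g(c/\sigma)|=\Omega(|D|/\sigma)\gtrsim \delta_2/\sigma$, which under $\sigma=\Omega(\delta_1)$ is already at least the claimed $\Omega(\delta_1\delta_2/\sigma^2)$.

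\textbf{Non-nested case.} The WLOG ordering forces either $\mu_0\leq\mu_0'\leq\mu_1\leq\mu_1'$ or $\mu_0\leq\mu_1\leq\mu_0'\leq\mu_1'$. In both subcases $\mu_0'-\mu_0\geq 0$ and $\mu_1'-\mu_1\geq 0$, so $D\geq |\Delta-\Delta'|$ and hence $|D|\geq \delta_2/2$. When $\sigma=\Omega(\delta_1)$, applying the imaginary-part estimate at $t=c/\sigma$ yields $|g|=\Omega(\delta_2/\sigma)$ directly. In the complementary regime $\sigma\ll \delta_1$ the Fourier bound at $t=c/\sigma$ degrades because $\cos(\Delta t),\cos(\Delta' t)$ oscillate, and I would switch to a direct argument on the real line: partition $\bb{R}$ at the midpoints of $\{\mu_0,\mu_0'\}$ and $\{\mu_1,\mu_1'\}$, discard the exponentially small leakage from the far components on each half, and lower-bound the restricted TV by (essentially half of) the TV distance between two single Gaussians whose means differ by up to $\delta_2$, which is $\Theta(\delta_2/\sigma)$ for $\delta_2\lesssim\sigma$ by the single-Gaussian bound of Devroye et al.

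\textbf{Main obstacle.} The bulk of the work lives in the nested case, where the dichotomy between $|D|$ and $|\Delta-\Delta'|$ must be tracked through Taylor expansions of both $\mathrm{Re}\,g$ and $\mathrm{Im}\,g$, and the constants must match up well enough to recover a single $\Omega(\delta_1\delta_2/\sigma^2)$ bound regardless of which term dominates. The hypothesis $\sigma=\Omega(\delta_1)$ is genuinely used here, since it is what keeps the cosines close to $1$ and the sines in their linear regime at $t=c/\sigma$. The secondary delicate point is the $\sigma\ll\delta_1$ subregime of the non-nested case, where the Fourier route alone no longer directly delivers $\Omega(\delta_2/\sigma)$ and a real-line component-separation argument appears cleaner.
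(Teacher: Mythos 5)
Your main-case computation (nested means with $\sigma=\Omega(\delta_1)$) is essentially the paper's argument in a cleaner midpoint/half-width parametrization, and it is correct: the identity $\delta_2=|D|+|\Delta-\Delta'|$, the dichotomy on which term dominates, and the real/imaginary-part estimates at $t=c/\sigma$ reproduce what the paper obtains from its trigonometric lemma for $|e^{it\mu_0}+e^{it\mu_1}-e^{it\mu_0'}-e^{it\mu_1'}|$. The trouble is in the ``otherwise'' branch. First, your case decomposition does not cover it: ``otherwise'' means \emph{not} (nested and $\sigma=\Omega(\delta_1)$), which includes the nested ordering $\mu_0\le\mu_0'\le\mu_1'\le\mu_1$ with $\delta_1\gg\sigma$; your ``Non-nested case'' explicitly assumes $\mu_1'>\mu_1$, and your ``Nested case'' assumes $\sigma=\Omega(\delta_1)$, so this configuration (the paper's Case~3 with $\mu_1'\le\mu_1$, handled there by Lemma~\ref{lem:small_prec} with an instance-dependent frequency) is never addressed. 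Your real-line clustering idea would in fact cover it when $\delta_2\lesssim\sigma$, because then $\{\mu_0,\mu_0'\}$ and $\{\mu_1,\mu_1'\}$ are two tight clusters separated by roughly $\delta_1$, but you neither state nor verify this, so as written the case is simply missing.

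Second, in the non-nested case with $\sigma=\Omega(\delta_1)$, the claim that the imaginary-part estimate at the fixed frequency $t=c/\sigma$ ``directly'' yields $|g|=\Omega(\delta_2/\sigma)$ fails when $D\gg\sigma$, which is perfectly compatible with $\sigma=\Omega(\delta_1)$ (two narrow mixtures far apart): $|\sin(Dt/2)|$ is not of order $\min(1,Dt)$ and can vanish exactly at $t=c/\sigma$ by periodicity. Concretely, take $\mu_1-\mu_0=\mu_1'-\mu_0'=\sigma/10$ and $\mu_0'-\mu_0=2\pi\sigma/c$; then $\Delta=\Delta'$ and $g(c/\sigma)=\cos(\Delta t)\bigl(e^{imt}-e^{im't}\bigr)=0$ while the TV distance is near $1$. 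This is exactly the cancellation phenomenon the paper warns about and is why it splits off $\delta_2\ge 2\sigma$ and proves a constant lower bound by a direct mass argument (Lemma~\ref{lem:large_gap}), reserving the Fourier estimate for $\delta_2\le 2\sigma$, where $Dt$ really is small and your estimate is sound. The same split is needed for your spatial argument in the $\sigma\ll\delta_1$ regime: ``discard the exponentially small leakage from the far components'' presumes the four means form two tight clusters, which holds only when $\delta_2\lesssim\sigma$ (with, say, $\mu_0,\mu_0',\mu_1$ all within $O(\sigma)$ and only $\mu_1'$ far, the leakage near the left cluster is not small). With these repairs your plan goes through, and it is a genuinely different, arguably simpler route for the well-separated regime than the paper's carefully tuned frequency in Lemma~\ref{lem:small_prec}; but as submitted it has one missing case and one step that fails.
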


\subsection{Related Work}

Let $\f{I}$ denote the $d$-dimensional identity matrix. Statistical distances between a pair of $k$-component $d$-dimensional Gaussian mixtures $f=\sum_{i=1}^{k}k^{-1}\ca{N}(\f{\mu}_i,\f{I})$ and $f'=\sum_{i=1}^{k}k^{-1}\ca{N}(\f{\mu}'_i,\f{I})$ with shared, known component covariance $\f{I}$ have been studied in \cite{doss2020optimal, wu2020optimal}. For a $k$-component Gaussian mixture $f=\sum_{i=1}^{k}k^{-1}\ca{N}(\f{\mu}_i,\f{I})$, let $M_{\ell}(f)=\sum_{i=1}^{\ell}k^{-1}\f{\mu}_i^{\otimes \ell}$ where $\f{x}^{\otimes \ell}$ is the $\ell$-wise tensor product of $\f{x}$. We denote the Kullback-Leibler divergence, Squared Hellinger divergence, and $\chi^2$-divergence of $f,f'$ by $\left|\left|f-f' \right|\right|_{\s{KL}}, \left|\left|f-f' \right|\right|_{\s{H}^2}$, and $\left|\left|f-f' \right|\right|_{\chi^2}$ respectively. We write $\left|\left|M\right|\right|_F$ to denote the Frobenius norm of the matrix $M$.
Prior work shows the following.

\begin{thm}[Theorem 4.2 in \cite{doss2020optimal}]\label{thm:prev}
Consider mixtures
$f=\sum_{i=1}^{k}k^{-1}\ca{N}(\f{\mu}_i , \f{I})$ and $f'=\sum_{i=1}^{k}k^{-1}\ca{N}(\f{\mu}'_i,\f{I})$ where $\left|\left|\f{\mu}_i\right|\right|_2\le R,\left|\left|\f{\mu}_i'\right|\right|_2\le R,$ for all $i \in [k]$ and constant $R\ge 0$. For any distance $D\in \{\s{H}^2,\s{KL},\chi^2\}$, we  have
$
    \|f-f'\|_{D} = \Theta\Big(\max_{\ell \le 2k-1} \|M_{\ell}(f)-M_{\ell}(f')\|_{F}^2\Big).
$
\end{thm}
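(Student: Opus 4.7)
The plan is to use the orthogonal Hermite expansion of $f/\phi$ and $f'/\phi$ against the standard Gaussian $\phi=\ca{N}(\f{0},\f{I})$, which rewrites $\chi^2$-type divergences as weighted $\ell_2$-sums of moment differences. The $k$-component structure and bounded-means hypothesis then let me truncate the expansion at order $2k-1$.

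Each Gaussian component admits the generating identity
\[
\frac{\ca{N}(\f{\mu},\f{I})(\f{x})}{\phi(\f{x})} = \exp\!\p{\f{\mu}^T\f{x}-\tfrac{1}{2}\|\f{\mu}\|_2^2} = \sum_{\ell\ge 0}\frac{1}{\ell!}\ip{\f{\mu}^{\otimes\ell}}{\f{H}_\ell(\f{x})},
\]
where $\f{H}_\ell$ are the tensor Hermite polynomials satisfying $\int \ip{A}{\f{H}_\ell}\ip{B}{\f{H}_m}\phi\,d\f{x} = \ell!\,\delta_{\ell m}\ip{A}{B}$. Averaging over components, subtracting, and applying Parseval yields the key identity
\[
\int \frac{(f-f')^2}{\phi}\,d\f{x} = \sum_{\ell\ge 1}\frac{1}{\ell!}\big\|M_\ell(f)-M_\ell(f')\big\|_F^2.
\]
Because $\|\f{\mu}_i\|_2,\|\f{\mu}_i'\|_2\le R$, the ratios $f/\phi, f'/\phi, f/f', f'/f$ are pointwise bounded above and below by constants depending only on $R$ after standard Gaussian-tail control. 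Combined with the chain $\s{H}^2\le \s{KL}\le \chi^2$ and a Taylor-type reversal valid under bounded density ratios, this places all three target divergences within constant factors of $\int(f-f')^2/\phi$.

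The lower bound direction is immediate: every term in the Parseval sum is nonnegative, so $\int(f-f')^2/\phi \ge \tfrac{1}{(2k-1)!}\max_{\ell\le 2k-1}\|M_\ell(f)-M_\ell(f')\|_F^2$, yielding $\max_{\ell\le 2k-1}\|M_\ell(f)-M_\ell(f')\|_F^2 \lesssim_{k,R} \|f-f'\|_D$ for each $D\in\{\s{H}^2,\s{KL},\chi^2\}$. For the upper bound I would use that, for uniformly weighted $k$-component mixtures with bounded means, the empirical measure $\tfrac{1}{k}\sum_{i=1}^k\delta_{\f{\mu}_i}$ is determined by the symmetric tensor moments $M_1,\ldots,M_{2k-1}$ via the classical Prony/identifiability theorem. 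Promoting this algebraic injectivity to a quantitative bound $\|M_\ell(f)-M_\ell(f')\|_F \le C(\ell,k,R)\max_{m\le 2k-1}\|M_m(f)-M_m(f')\|_F$ with the coefficients $C(\ell,k,R)^2/\ell!$ summable in $\ell$ — through a Newton-identity / quadrature argument combined with compactness on $\{\|\f{\mu}\|_2\le R\}^k$ modulo permutations — caps the Parseval series by $C'_{k,R}\max_{m\le 2k-1}\|M_m(f)-M_m(f')\|_F^2$, completing the upper bound for $\chi^2$ (and hence $\s{H}^2,\s{KL}$).

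The main obstacle is this last step: converting algebraic moment identifiability into quantitative control of high-order moment differences by the first $2k-1$, with constants uniform enough in $\ell$ that the Parseval tail is summable. The bounded-means assumption is essential, both for compactness of the parameter domain (ensuring a uniform modulus of continuity for the inverse moment map) and for the Gaussian-tail control that makes $\chi^2,\s{KL},\s{H}^2$ mutually comparable; without it, higher-order moments of $f$ and $f'$ can be large in norm yet have differences uncontrollable by low-order differences, and the equivalence breaks.
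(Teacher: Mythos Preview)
This theorem is not proved in the present paper; it is quoted verbatim as Theorem~4.2 of \cite{doss2020optimal} and used only as a black box (to obtain upper bounds for the tightness discussion in Section~\ref{subsection:tight}). There is therefore no proof in this paper to compare your proposal against.

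For what it is worth, your sketch is essentially the approach taken in \cite{doss2020optimal} and \cite{wu2020optimal}: expand $f/\phi$ in tensor Hermite polynomials, use orthogonality to write $\chi^2(f\|\phi\text{-centered})$ as $\sum_{\ell\ge 1}\tfrac{1}{\ell!}\|M_\ell(f)-M_\ell(f')\|_F^2$, and then invoke bounded density ratios (from $\|\f{\mu}_i\|_2\le R$) to show $\s{H}^2,\s{KL},\chi^2$ are all comparable to this quantity. The step you correctly flag as the obstacle---controlling the tail $\ell\ge 2k$ by the first $2k-1$ moment differences---is precisely the nontrivial content of their proof, and it is carried out there via Newton--Girard-type identities relating power sums to elementary symmetric polynomials together with a compactness argument on the bounded parameter space. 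Your proposal identifies the right ingredients but does not supply that argument; if you want a self-contained proof you would need to fill in exactly that quantitative moment-comparison step.
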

\noindent
This bound alone does not give a guarantee for the TV distance.
However it is well-known that,
\begin{align}\label{eq:hellinger}
    \left|\left|f_{\f{\mu}_0,\f{\mu}_1}-f_{\f{\mu}_0',\f{\mu}_1'} \right|\right|_{\s{TV}} \ge \left|\left|f_{\f{\mu}_0,\f{\mu}_1}-f_{\f{\mu}_0',\f{\mu}_1'} \right|\right|_{\s{H^2}}.
\end{align}
We can use this in conjunction with Theorem~\ref{thm:prev} to get a lower bound on TV distance, but it is suboptimal for many canonical instances.
For example, consider one-dimensional Gaussian mixtures 
\begin{align}\label{eq:example}
    f = \frac12 \ca{N}(u,1)+\frac12 \ca{N}(-u,1) \quad \text{and} \quad f' = \frac12 \ca{N}(2u,1)+\frac12 \ca{N}(-2u,1).
\end{align}
Using Eq.~(\ref{eq:hellinger}) and Theorem~\ref{thm:prev}, we have that $\left|\left|f-f' \right|\right|_{\s{TV}}=\Omega(u^4)$.  On the other hand, by using our result (Theorem \ref{thm:main1}), we obtain the improved bound  $\left|\left|f-f' \right|\right|_{\s{TV}}=\Omega(u^2)$. The improvement becomes more significant as $u$ becomes smaller. Also, the prior result in Theorem~\ref{thm:prev} assumes that  the means of the two mixtures $f,f'$ are contained in a ball of constant radius, limiting its applicability. 

The TV distance between Gaussian mixtures with two components when $d=1$ has been recently studied in the context of parameter estimation~\cite{feller2016weak, ho2016convergence, manole2020uniform,heinrich2018strong}. The TV distance guarantees in these papers are more general, as they do not need the component covariances to be same. However,
the results and their proofs are tailored towards the case when both the mixtures
have zero mean. They do not apply when considering the TV distance between two mixtures with distinct means.
Theorems~\ref{thm:main_high_dim_tv} and~\ref{thm:main1} hold for all mixtures with shared component variances, without assumptions on the means.  

Further, our bound can be tighter than these prior results, even in the case when the mixtures have zero mean.
Consider again the pair of mixtures $f,f'$ defined in Eq.~(\ref{eq:example}) above.
 In \cite{manole2020uniform,ho2016convergence}, the authors show that $\left|\left|f-f'\right|\right|_{\s{TV}} = \Omega(u^4)$; see, e.g., Eq.~(2.7) in \cite{manole2020uniform}. Notice that this is the same bound that can be recovered from Theorem \ref{thm:prev}, and
 as we mentioned before, this bound is loose. By using Theorem \ref{thm:main1}, we obtain the improved bound $\left|\left|f-f'\right|\right|_{\s{TV}} = \Omega(u^2)$. Now
 consider a more general pair of mixtures, where for $u,v \ge 0$, we define
 \begin{align}\label{eq:example2}
    f = \frac12 \ca{N}(u,1)+ \frac12  \ca{N}(-u,1) \quad \text{and} \quad f' =  \frac12  \ca{N}(v,1)+ \frac12  \ca{N}(-v,1).
\end{align}    
In \cite{feller2016weak} (see the proof of Lemma G.1 part (b)), the authors show that $\left|\left|f-f'\right|\right|_{\s{TV}} = \Omega((u-v)^2)$. Notice that for the previous example in Eq.~(\ref{eq:example}) with $v=2u$, the result in \cite{feller2016weak} leads to the bound $\left|\left|f-f'\right|\right|_{\s{TV}} = \Omega(u^2)$, which is the same bound that can be obtained from Theorem \ref{thm:main1}. However, for any small $\epsilon>0$, by setting $v=u+\epsilon$, we see that the bound in \cite{feller2016weak} reduces to $\left|\left|f-f'\right|\right|_{\s{TV}}=\Omega(\epsilon^2)$. On the other hand, by using Theorem \ref{thm:main1}, we obtain  $\left|\left|f-f'\right|\right|_{\s{TV}} = \Omega(u \cdot \epsilon+\epsilon^2)$. Whenever $u \gg \epsilon$, our result provides a much larger and tighter lower bound. On the other hand, whenever $u<\epsilon$, our bound coincides with that of \cite{feller2016weak}.

\subsection{Tightness of the TV distance bound}\label{subsection:tight}
Our bounds on the TV distance are tight up to constant factors.
For example, let $\f{u}\in \bb{R}^d$ be a $d$-dimensional vector satisfying $\left|\left|\f{u}\right|\right|_2 <1$. Consider the mixtures
$
    f = 0.5 \ca{N}(\f{u},\f{I})+0.5 \ca{N}(-\f{u},\f{I})$ and $f' = 0.5 \ca{N}(2\f{u},\f{I})+0.5 \ca{N}(-2\f{u},\f{I})$. Considering the notation of Theorem~\ref{thm:main_high_dim_tv}, we have $\f{v}_1 = 2\f{u}$ and $\f{v}_2 = \f{u}$, and the first bound in the theorem implies that $\left|\left|f-f'\right|\right|_{\s{TV}} \ge \Omega(\left|\left|\f{u}\right|\right|_2^2).$ On the other hand, we use the inequality $\left|\left|f-f'\right|\right|_{\s{TV}} \le \sqrt{2\left|\left|f-f'\right|\right|_{\s{H}^2}}$
in conjunction with Theorem~\ref{thm:prev}. In the notation of Theorem~\ref{thm:prev}, note that $M_{1}(f)-M_{1}(f') =0$, and we can upper bound the max over $\ell \in\{2,3\}$ by the sum of the two terms to say that
\begin{align*}
    \left|\left|f-f'\right|\right|_{\s{TV}} &\leq
    O\Big(\max_{\ell \in \{2,3\}} \|M_{\ell}(f)-M_{\ell}(f')\|_{F}^2\Big)\\
    &\leq O(\left|\left|\f{u}\otimes \f{u}\right|\right|_{F}+\left|\left|\f{u}\otimes \f{u}\otimes \f{u}\right|\right|_{F}) = O(\left|\left|\f{u}\right|\right|^2_{2}+\left|\left|\f{u}\right|\right|^3_{2}). 
\end{align*}
Since $\left|\left|\f{u}\right|\right|_2 <1$, we see that $\left|\left|\f{u}\right|\right|^2_{2}$ is the dominating term on the RHS, and  $\left|\left|f-f'\right|\right|_{\s{TV}}= \Theta(\left|\left|\f{u}\right|\right|^2_{2})$. As a result, our TV distance bound in Theorem~\ref{thm:main_high_dim_tv} is tight as a function of the means for this example.

 Our bounds are tight in other instances too. Consider the second parts of Theorems \ref{thm:main_high_dim_tv} and \ref{thm:main1} when samples are pre-multiplied with $\f{\Sigma}^{-1/2}$.
Here, we can use the triangle inequality to derive a simple upper bound on the TV distance, 
\begin{align*}
    \left|\left|f_{\f{\mu}_0,\f{\mu}_1}-f_{\f{\mu}_0',\f{\mu}_1'} \right|\right|_{\s{TV}} \le \frac{1}{2}\min\Big(&\left|\left|\ca{N}(\f{\mu}_0,\f{\Sigma})-\ca{N}(\f{\mu}_0',\f{\Sigma})\right|\right|_{\s{TV}}+\left|\left|\ca{N}(\f{\mu}_1,\f{\Sigma})-\ca{N}(\f{\mu}_1',\f{\Sigma})\right|\right|_{\s{TV}},\\
    &\left|\left|\ca{N}(\f{\mu}_1,\f{\Sigma})-\ca{N}(\f{\mu}_0',\f{\Sigma})\right|\right|_{\s{TV}}+\left|\left|\ca{N}(\f{\mu}_0,\f{\Sigma})-\ca{N}(\f{\mu}_1',\f{\Sigma})\right|\right|_{\s{TV}}\Big).
\end{align*}
Then, we can use tight bounds on the TV distance between single Gaussians. In the one-dimensional setting, Theorem 1.3 in \cite{devroye2018total} shows that $||f_{\mu_0,\mu_1}-f_{\mu_0',\mu_1'}||_{\s{TV}} = O(\max(1,\delta_2/\sigma))$, recalling that $\delta_2 =  \max\{|\mu_0'-\mu_0|, |\mu_1-\mu_1'|\}$.  In the high dimensional setting, Theorem 1.2 in \cite{devroye2018total} shows that $$\left|\left|f_{\f{\mu}_0,\f{\mu}_1}-f_{\f{\mu}_0',\f{\mu}_1'}\right|\right|_{\s{TV}} = O\left(\max\left(1,\frac{1}{\lambda_{\min}(\f{\Sigma})} \cdot \min \left(\|\f{v_2}\|_2,\|\f{v_3}\|_2\right)\right)\right),$$ 
recalling $\f{v}_2$ and $\f{v}_3$ from the definitions in Theorem \ref{thm:main_high_dim_tv} and letting $\lambda_{\min}(\f{\Sigma})$ be the minimum eigenvalue of $\f{\Sigma}$. 
Again by the invariance property, TV distance remains the same if the samples are pre-multiplied by $\f{\Sigma}^{-1/2}$. With this transformation, the component co-variance matrix is $\f{I}$ and 
$$\left|\left|f_{\f{\mu}_0,\f{\mu}_1}-f_{\f{\mu}_0',\f{\mu}_1'}\right|\right|_{\s{TV}} = O\left(\max\left(1,\min\left(\|\f{\Sigma}^{-1/2}\f{v_2}\|_{2},\|\f{\Sigma}^{-1/2}\f{v_3}\|_{2}\right)\right)\right).$$ It follows that the second parts of Theorems \ref{thm:main_high_dim_tv} and \ref{thm:main1} are tight up to constants when samples are pre-multiplied with $\f{\Sigma}^{-1/2}$.

On the other hand, when comparing with the Hellinger distance upper bound of \cite{doss2020optimal}, our lower bound on the TV distance is not tight in the following case. Define two $d$-dimensional mixtures $f=0.5 \ca{N}(\f{u},\f{I})+0.5 \ca{N}(-\f{u},\f{I})$ and $f' = 0.5 \ca{N}(4\f{u},\f{I})+0.5 \ca{N}(-2\f{u},\f{I})$. The mean of $f'$ is $\f{u}$. Applying Theorem \ref{thm:prev}, we get an upper bound of $\|f-f'\|_{\s{H}^2} =O(\|\f{u}\|_2+\|\f{u}\|_2^2+\|\f{u}\|_2^3)= O(\|\f{u}\|_2)$, when $\|\f{u}\|_2 \ll 1$. In contrast, Theorem \ref{thm:main_high_dim_tv} only gives a lower bound of $\Omega(\|\f{u}\|_2^2)$, which can be much smaller than $\|\f{u}\|_2$. It would be an interesting open direction to derive tight bounds on this instance. We do not know if this is an inherent limitation of either of the bounds, and it may be possible to extend our results to capture the $\|\f{u}\|_2$ term, or tighten the upper bound.

\subsection{Preliminaries}

We use $\Omega(\cdot)$, $O(\cdot)$, and $\Theta(\cdot)$ to hide absolute constants. For vectors $\f{u},\f{v} \in \mathbb{R}^d$, we let $\ip{\f{u}}{\f{v}}$ denote the Euclidean inner product.
We use the characteristic function of a distribution, defined below.
\begin{defn} The characteristic function $C_f: \mathbb{R} \to \mathbb{C}$ of a distribution $f$ is 
$
C_f(t) = \int_{\mathbb{R}} e^{itx} f(x) dx.
$
\end{defn}
\noindent
If $X$ is a random variable with distribution $f$, then $C_f(t) = \bb{E}_{X \sim f}[ e^{itX}]$. The characteristic function of a two-component, one-dimensional mixture $f_{\mu_0,\mu_1}\in \ca{F}$ is
$
    C_{f_{\mu_0,\mu_1}}(t) =\frac12 e^{-\sigma^2t^2/2}(e^{it\mu_0}+e^{it\mu_1}).
$
The characteristic function can be used to bound the TV distance with the following lemma. 
\begin{lemma}[\cite{krishnamurthy20a}]\label{lem:chartv}
For distributions $f,f'$ on a shared sample space $\Omega\subseteq\mathbb{R}$, 
$$
\left\|f -f'\right\|_{\s{TV}} \ge \frac{1}{2} \sup_{t \in \bb{R}}|C_f(t) -C_{f'}(t)|.
$$
\end{lemma}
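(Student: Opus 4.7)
The plan is to reduce the inequality to a single application of the triangle inequality for integrals, after first reconciling the set-based definition of TV distance given in the paper with the $L^1$ form that pairs naturally with the Fourier/characteristic-function representation.

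First I would rewrite $\|f-f'\|_{\s{TV}}$ in its $L^1$ form. Taking $\ca{A}^* = \{x : f(x) > f'(x)\}$, the sup in the definition is attained at $\ca{A}^*$, so $\|f-f'\|_{\s{TV}} = f(\ca{A}^*) - f'(\ca{A}^*)$. Since $f$ and $f'$ are probability distributions, $\int (f-f')\,dx = 0$, hence $\int_{\ca{A}^*}(f-f')\,dx = -\int_{\Omega\setminus \ca{A}^*}(f-f')\,dx$, which gives the standard identity
\[
\|f-f'\|_{\s{TV}} = \tfrac{1}{2}\int_{\Omega} |f(x)-f'(x)|\,dx.
\]

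Next I would fix an arbitrary $t \in \bb{R}$ and expand the difference of characteristic functions:
\[
C_f(t) - C_{f'}(t) = \int_{\Omega} e^{itx}\bigl(f(x)-f'(x)\bigr)\,dx.
\]
Applying the triangle inequality for complex-valued integrals and using $|e^{itx}| = 1$ for all real $t,x$, I obtain
\[
|C_f(t) - C_{f'}(t)| \le \int_{\Omega} |e^{itx}| \cdot |f(x)-f'(x)|\,dx = \int_{\Omega} |f(x)-f'(x)|\,dx = 2\|f-f'\|_{\s{TV}},
\]
where the last equality uses the rewriting from the first step. Dividing by $2$ gives the bound $\|f-f'\|_{\s{TV}} \ge \tfrac{1}{2}|C_f(t)-C_{f'}(t)|$ for every $t$, and taking the supremum over $t \in \bb{R}$ on the right-hand side yields the claim.

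There is no substantive obstacle here: the argument is entirely two lines once the $L^1$ form of TV distance is in place. The only mild care is that $C_f$ takes values in $\bb{C}$, so the triangle inequality must be applied in its complex-valued form (which is immediate from $|a+b| \le |a|+|b|$ applied via Riemann sums or by decomposing into real and imaginary parts). Everything else is a direct calculation, and no properties specific to Gaussians or mixtures are used, so the lemma holds for arbitrary distributions on $\Omega \subseteq \bb{R}$ with densities, as stated.
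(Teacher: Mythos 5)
Your proof is correct. The paper does not prove this lemma itself --- it is imported from \cite{krishnamurthy20a} --- and your argument is the standard (essentially the only natural) one: pass to the $L^1$ form $\|f-f'\|_{\s{TV}}=\tfrac12\int|f-f'|$, bound $|C_f(t)-C_{f'}(t)|\le\int|e^{itx}||f(x)-f'(x)|\,dx$ using $|e^{itx}|=1$, and take the supremum over $t$. The only implicit restriction is that $f,f'$ admit densities (needed for your choice of the set $\ca{A}^*=\{x: f(x)>f'(x)\}$ and the $L^1$ identity), which is harmless here since every distribution in the paper is a Gaussian mixture; for general distributions the same argument goes through verbatim with $f-f'$ read as a signed measure and its total variation in place of the $L^1$ norm.
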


\paragraph{Organization.} The rest of the paper is organized as follows. In 
Section~\ref{sec:tech}, we give a brief high level overview of our results. In Section~\ref{app:complex}, we provide the main parts of the proof of our TV distance result for one dimensional mixtures, which is based on elementary complex analysis. Subsequently, in Section~\ref{sec: highdim-tv-lb}, we provide the proof of the TV distance lower bound in the high dimensional case.

\section{Technical Overview}\label{sec:tech}
In one dimension, we lower bound the TV distance as follows. 
For $f_{\mu_0,\mu_1},f_{\mu_0',\mu_1'} \in \ca{F}$, suppose
$\mu_0$ is the smallest mean. Recall that $\delta_1 = \max\{|\mu_0-\mu_1|, |\mu_0'-\mu_1'|\} $ and
$\delta_2 = \max\{|\mu_0'-\mu_0|, |\mu_1-\mu_1'|\} $.
If $[\mu_0',\mu_1'] \subseteq [\mu_0,\mu_1]$,
then 
$
    ||f_{\mu_0,\mu_1}-f_{\mu_0',\mu_1'}||_{\s{TV}} \ge \Omega(\min(1, \delta_1\delta_2 /\sigma^2))
$
and otherwise,
$
    ||f_{\mu_0,\mu_1}-f_{\mu_0',\mu_1'}||_{\s{TV}} \ge \Omega(\min(1, \delta_2 /\sigma)).
$
The latter case corresponds to when either both means from one mixture are smaller than another, i.e., $\mu_0 \leq  \mu_1 \leq \mu_0', \mu_1'$, or the mixtures' means are interlaced, i.e., $\mu_0 \leq \mu_0' \leq \mu_1\leq \mu_1'$.

We use Lemma \ref{lem:chartv} to lower bound the TV distance between mixtures $f_{\mu_0,\mu_1}, f_{\mu_0',\mu_1'} \in \ca{F}$ by the modulus of a complex analytic function:
\begin{equation}
\label{eq: lowerbound_tv-prelim}
4 \left |\left |f_{\mu_0,\mu_1} -f_{\mu_0',\mu_1'}\right|\right|_{\s{TV}} 
 \ge 
\sup_t  e^{-\frac{\sigma^2 t^2}{2}} \left |e^{i t \mu_0} + e^{i t \mu_1} - e^{i t \mu'_0}-e^{i t \mu'_1} \right |.
\end{equation}
Let $h(t)=  e^{i t \mu_0} + e^{i t \mu_1} - e^{i t \mu'_0}-e^{i t \mu'_1}$.
A lower bound on $||f_{\mu_0,\mu_1} -f_{\mu_0',\mu_1'}||_{\s{TV}} $ can be obtained by 
taking $t=1/(c\sigma)$ for $c$ constant, so that $e^{- \sigma^2 t^2/2}$ is not too small. Then, it remains to bound $ |h(t) |$ at the chosen value of $t$. In some cases, we will have to choose the constant $c$ very carefully, as terms in $h(t)$ can cancel out due to the periodicity of the complex exponential function. For instance, if $\mu_0=0$, $\mu_1=200 \sigma$, $\mu_0' = \sigma$, and $\mu_1'=201\sigma$ with $\sigma = 2 \pi$, then $|h(1)|=0$.

It is reasonable to wonder whether there is a simple, global way to lower bound Eq.~(\ref{eq: lowerbound_tv-prelim}).
We could reparameterize the function  $h(t)$ as the complex function $g(z) = z^{\mu_0} +  z^{\mu_1}-z^{\mu_0'}-z^{ \mu_1'}$, where $z = e^{it}$,
then study $|g(z)|$, for $z$ in the disc with center 0 and radius 1 in the complex plane.
However, we are unaware of a global way to bound $|g(z)|$ here due to the fact that (i) $g(z)$ is not analytic at 0 when the means are non-integral and (ii) there is not a clear, large lower bound for $g(z)$ anywhere inside the unit disc.
These two facts obstruct the use of either the Maximum Modulus Principle or tools from harmonic measure to obtain lower bounds. Instead, we use a series of lemmas to handle the different ways that $|h(t)|$ can behave. The techniques include basic complex analysis and Taylor series approximations of order at most three.


Let $f_{\f{\mu}_0,\f{\mu}_1}^{\f{t}}$ be the distribution of the samples obtained according to $f_{\f{\mu}_0,\f{\mu}_1}$ and projected onto the direction $\f{t} \in \bb{R}^d$.
We have (see Lemma \ref{lem:simple} for a proof)
\begin{align*}
    f_{\f{\mu}_0,\f{\mu}_1}^{\f{t}} \equiv \frac{\ca{N}(\f{\mu}_0^{T}\f{t},\f{t}^T\f{\Sigma}\f{t})}{2}+\frac{\ca{N}(\f{\mu}_1^{T}\f{t},\f{t}^{T}\f{\Sigma}\f{t})}{2}  \text{,} \quad f_{\f{\mu}_0',\f{\mu}_1'}^{\f{t}} \equiv \frac{\ca{N}(\f{\mu}_0'^{T}\f{t},\f{t}^{T}\f{\Sigma}\f{t})}{2}+\frac{\ca{N}(\f{\mu}_1'^{T}\f{t},\f{t}^{T}\f{\Sigma}\f{t})}{2}.
\end{align*}
By the data processing inequality for $f$-divergences (see Theorem 5.2 in \cite{devroye2012combinatorial}), we have
$\| f_{\f{\mu}_0,\f{\mu}_1}-f_{\f{\mu}_0',\f{\mu}_1'} \|_{\s{TV}} \ge \sup_{\f{t} \in \bb{R}^d} \| f_{\f{\mu}_0,\f{\mu}_1}^{\f{t}} -f_{\f{\mu}_0',\f{\mu}_1'}^{\f{t}} \|_{\s{TV}}.$
Using our lower bound on the TV distance between one-dimensional mixtures (Theorem \ref{thm:main1}), we obtain a lower bound on $\|f_{\f{\mu}_0,\f{\mu}_1}-f_{\f{\mu}_0',\f{\mu}_1'} \|_{\s{TV}}$ by choosing $\f{t} \in \bb{R}^d$ carefully. This  leads to Theorem~\ref{thm:main_high_dim_tv}.

\section{Lower Bound on TV Distance of 1-Dimensional Mixtures}\label{app:complex}

Consider distinct Gaussian mixtures $f_{\mu_0,\mu_1}, f_{\mu_0',\mu_1'} \in \ca{F}$.
Without loss of generality we will also let $\mu_0 \le \min(\mu_1,\mu_0',\mu_1')$ be the smallest unknown parameter, and let $\mu_1' \ge \mu_0'$. We maintain these assumptions throughout this section, and we will prove Theorem~\ref{thm:main1}. 


Eq.~(\ref{eq: lowerbound_tv-prelim}) implies that we can lower bound $||f_{\mu_0,\mu_1} -f_{\mu_0',\mu_1'}||_{\s{TV}}$ by the modulus of a complex analytic function with parameter $t$. Then, we can optimize the bound by choosing $t = \Theta(1/\sigma)$ and lower bounding the term in the absolute value signs.

We define the following parameters relative to the means to simplify some bounds:
\begin{align*}
\delta_1 &= \max(|\mu_0-\mu_1|, |\mu_0'-\mu_1'|) \qquad \delta_2 = \max(|\mu_0'-\mu_0|, |\mu_1-\mu_1'|)\\
\delta_3 &= \left| \mu_0+\mu_1-\mu_0'-\mu_1' \right | \qquad \hspace{8.5mm}
\delta_4 = \min(\left| \mu_0'-\mu_0 \right|, \left| \mu_1'-\mu_1 \right|).
\end{align*}
We first consider $t$ such that $t(\mu_1-\mu_0),t(\mu_1'-\mu_0),t(\mu_1'-\mu_0) \leq \frac{\pi}{4}$, which is covered in Lemma~\ref{lem: sepmeans2comp}.

\begin{lemma}\label{lem: sepmeans2comp}
 For $t>0$ with $t(\mu_1-\mu_0),t(\mu_1'-\mu_0),t(\mu_1'-\mu_0) \in [0,\frac{\pi}{4}]$, if $\mu_0',\mu_1' \in [\mu_0,\mu_1]$, then
\[
\left |e^{i t \mu_0} + e^{i t \mu_1} - e^{i t \mu'_0}-e^{i t \mu'_1} \right | \ge  \max\left (\frac{t^2(\delta_1-\delta_4)\delta_4}{2}, \frac{t\delta_3}{4\sqrt{2}}\right ) 
\]
and otherwise, when $\mu_1' > \mu_1$,
$
\left |e^{i t \mu_0} + e^{i t \mu_1} - e^{i t \mu'_0}-e^{i t \mu'_1} \right | \ge t\delta_2/ (2\sqrt{2}).
$
\end{lemma}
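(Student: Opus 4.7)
My plan is to reduce the problem to lower bounding $|h| = |1 + e^{ia} - e^{ib} - e^{ic}|$ where $a = t(\mu_1-\mu_0)$, $b = t(\mu_0'-\mu_0)$, and $c = t(\mu_1'-\mu_0)$ (obtained by factoring $e^{it\mu_0}$ out of the original expression), and then to analyze this quantity via the key identity
\[
|h|^2 \;=\; 16\sin^2(p/4)\sin^2(q/4) \;+\; 16\cos(a/2)\cos((c-b)/2)\sin^2(r/4),
\]
where $p = a+c-b$, $q = a+b-c$, and $r = a-b-c$. This follows from writing $1+e^{ia} = 2e^{ia/2}\cos(a/2)$ and $e^{ib}+e^{ic} = 2e^{i(b+c)/2}\cos((c-b)/2)$, computing the squared modulus of the resulting difference of two complex numbers, and applying $1-\cos X = 2\sin^2(X/2)$ together with $\cos A - \cos B = -2\sin((A+B)/2)\sin((A-B)/2)$. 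The point is that $|h|$ is bounded below by the square root of either summand, giving two independent estimates.

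For Case 1 ($[\mu_0',\mu_1'] \subseteq [\mu_0,\mu_1]$) the ordering $0 \le b \le c \le a \le \pi/4$ forces $p,q \ge 0$ with $p+q = 2a \le \pi/2$, so $p/4, q/4 \in [0,\pi/8]$. The second-order Taylor bound $\sin x \ge x(1 - x^2/6)$ on this interval yields a constant-factor inequality $4\sin(p/4)\sin(q/4) \ge c_0 \cdot pq$ with explicit $c_0 \approx 0.237$. For the $\delta_3$-type bound I use the second summand: $\cos(a/2)\cos((c-b)/2) \ge \cos^2(\pi/8)$, together with $|\sin(r/4)| \ge (2/\pi)|r|/4$ (valid since $|r|/4 \le 3\pi/16 < \pi/2$) and $|r| = t\delta_3$, gives $|h| \ge 2\cos(\pi/8)\,t\delta_3/\pi$, comfortably exceeding $t\delta_3/(4\sqrt 2)$.

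The main technical step, and the only place where constants need to be tracked carefully, is converting $4\sin(p/4)\sin(q/4) \ge c_0 \cdot pq$ into the claimed $t^2(\delta_1-\delta_4)\delta_4/2$. Introducing $m = t\delta_4 = \min(b,a-c)$ and $M = t\delta_2 = \max(b,a-c)$, a direct computation yields $q = m+M$ and $p = 2a - (m+M)$, so $pq = s(2a-s)$ with $s := m+M \in [2m,a]$; the upper endpoint uses $b + (a-c) \le c + (a-c) = a$. Since the quadratic $s \mapsto s(2a-s)$ is strictly increasing on $[0,a]$, its minimum on $[2m,a]$ is attained at $s = 2m$, delivering the key algebraic inequality $pq \ge 4m(a-m) = 4\,t^2\delta_4(\delta_1-\delta_4)$. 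Combined with the Taylor estimate, this gives $|h| \ge 4c_0 \, t^2\delta_4(\delta_1-\delta_4)$, which by $4c_0 > 1/2$ establishes the claim.

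For Case 2 ($\mu_1' > \mu_1$), both $\mu_0'-\mu_0 \ge 0$ and $\mu_1'-\mu_1 \ge 0$, so $|r| = t[(\mu_0'-\mu_0) + (\mu_1'-\mu_1)] \ge t\delta_2$ (one of the two summands equals $\delta_2$). The same second-summand estimate, now using $c - b \le c \le \pi/4$, yields $|h| \ge 2\cos(\pi/8)\,t\delta_2/\pi$, which exceeds $t\delta_2/(2\sqrt 2)$ and finishes the proof. Overall, the only subtle piece is the algebraic identification $q = m+M$, $p = 2a-(m+M)$, which keeps the multiplicative constant $1/2$ intact; everything else reduces to scalar trigonometric bookkeeping and the elementary Taylor estimates above.
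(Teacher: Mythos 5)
Your proof is correct, and it takes a genuinely different route from the paper's. The paper lower bounds the modulus by its imaginary part (for the $\delta_3$ and $\delta_2$ bounds) or its real part (for the $(\delta_1-\delta_4)\delta_4$ bound), factors out $e^{it\mu_0}$ or $e^{it\mu_1}$ depending on whether $\mu_0'-\mu_0$ or $\mu_1-\mu_1'$ dominates, and runs this sub-casework through three ad hoc trigonometric facts. You instead establish the exact identity
\[
\left|1+e^{ia}-e^{ib}-e^{ic}\right|^2=16\sin^2\!\left(\tfrac{p}{4}\right)\sin^2\!\left(\tfrac{q}{4}\right)+16\cos\!\left(\tfrac{a}{2}\right)\cos\!\left(\tfrac{c-b}{2}\right)\sin^2\!\left(\tfrac{r}{4}\right),
\]
whose two summands are both non-negative under the hypothesis (since $a$ and $c-b$ lie in $[0,\pi/4]$), so each summand alone yields a lower bound. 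I verified the identity, the sign conditions, and the key identifications: in Case 1, $a=t\delta_1$, $\{b,a-c\}=\{t\delta_4,t\delta_2\}$ (using $[\mu_0',\mu_1']\subseteq[\mu_0,\mu_1]$ and $\mu_0'\le\mu_1'$), hence $q=t(\delta_2+\delta_4)$, $p=2a-q$, and the monotonicity of $s\mapsto s(2a-s)$ on $[2t\delta_4,a]$ gives $pq\ge 4t^2\delta_4(\delta_1-\delta_4)$, with $4c_0\approx 0.95>1/2$ closing the second-order bound; $|r|=t\delta_3$ in Case 1 and $|r|=t[(\mu_0'-\mu_0)+(\mu_1'-\mu_1)]\ge t\delta_2$ in Case 2 give the first-order bounds with constant $2\cos(\pi/8)/\pi\approx 0.59$, comfortably above $1/(4\sqrt2)$ and $1/(2\sqrt2)$. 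What your approach buys is that a single decomposition produces all three bounds from one formula, eliminating the paper's ordering casework and giving slightly better constants; what the paper's componentwise real/imaginary-part estimates buy is flexibility—they adapt more directly to unequal mixing weights (as remarked in the conclusion), whereas your identity leans on the symmetric form $1+e^{ia}-e^{ib}-e^{ic}$ specific to equal weights.
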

\begin{figure}
    \centering
    \includegraphics[width=15cm]{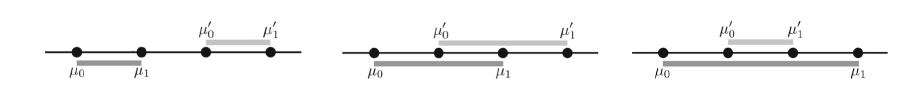}
    \caption{Layout of the means for Theorem~\ref{thm:main1}. The means can be ordered in different ways, which affects the analysis of lower bounding $|e^{it \mu_0}+e^{i t\mu_1}-e^{i t \mu_0'}-e^{it \mu_1'}|$ in Lemma~\ref{lem: sepmeans2comp}.
    For a fixed $t$, the order affects (i) whether the real or imaginary part of $e^{it \mu_0}+e^{i t\mu_1}-e^{i t \mu_0'}-e^{it \mu_1'}$ has large modulus and (ii) whether the terms from $\mu_0$ and $\mu_1$ or $\mu_0'$ and $\mu_1'$ dominate. }
    \label{fig:ordering-means}
\end{figure}
See Figure~\ref{fig:ordering-means} for an illustration of the different ways that the means can be ordered.
The lemma follows from straightforward calculations that only use Taylor series approximations, trigonometric  identities, and basic facts about complex numbers. We include the proof  in Appendix~\ref{sec:lemma-proofs}.

Recall that we will choose $t= \Theta(1/\sigma)$ to cancel the exponential term in Eq.~(\ref{eq: lowerbound_tv-prelim}). Therefore,  Lemma~\ref{lem: sepmeans2comp} handles the case when all the means are within some interval of size $\Theta(\sigma)$.

Next, we prove that when the separation between the mixtures is substantially fair apart---when either $|\mu_0-\mu_0'|$ or $|\mu_1-\mu_1'|$ is at least $2 \sigma$---we have a constant lower bound on the TV distance. Recall that it is without loss of generality to assume that $\mu_0$ is the smallest parameter and $\mu_1' > \mu_0'$. A similar result as the following two lemmas has been observed previously (e.g.,~\cite{hardt2015tight}) but we provide a simple and self-contained proof.

\begin{lemma}{\label{lem:large_gap}}
If $\max(\left|\mu_0-\mu_0' \right|, \left|\mu_1-\mu_1' \right|) \ge 2\sigma$, then 
it follows that $||f_{\mu_0,\mu_1} - f_{\mu_0',\mu_1'}||_{\s{TV}} \geq \Omega(1)$.
\end{lemma}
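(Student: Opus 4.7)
The plan is to exhibit an explicit half-line $A \subseteq \mathbb{R}$, chosen case by case, on which one mixture places probability mass at least $\frac{1}{2}\Phi(1) \approx 0.42$ while the other places mass at most $\Phi(-1) = 1 - \Phi(1) \approx 0.16$, where $\Phi$ is the standard normal CDF. From the definition of TV distance as a supremum over measurable sets, this immediately yields
\[
\|f_{\mu_0,\mu_1} - f_{\mu_0',\mu_1'}\|_{\s{TV}} \;\ge\; \bigl|\,f_{\mu_0,\mu_1}(A) - f_{\mu_0',\mu_1'}(A)\,\bigr| \;\ge\; \tfrac{1}{2}\Phi(1) - \Phi(-1) = \Omega(1).
\]
The argument relies only on the standing assumptions $\mu_0 \le \min(\mu_1,\mu_0',\mu_1')$ and $\mu_0' \le \mu_1'$, and on the monotonicity fact that shifting a Gaussian's mean further from a half-line decreases its mass on that half-line.

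I will split according to which inequality in $\max(|\mu_0 - \mu_0'|, |\mu_1 - \mu_1'|) \ge 2\sigma$ holds. \textbf{Case 1:} $\mu_0' - \mu_0 \ge 2\sigma$. Take $A = (-\infty,\, \mu_0 + \sigma]$. The component $\ca{N}(\mu_0, \sigma^2)$ alone contributes $\frac{1}{2}\Phi(1)$ to $f_{\mu_0,\mu_1}(A)$, while both $\mu_0'$ and $\mu_1' \ge \mu_0'$ lie at least $2\sigma$ to the right of $\mu_0 + \sigma$, so each component of $f_{\mu_0',\mu_1'}$ contributes at most $\frac{1}{2}\Phi(-1)$ to $A$. \textbf{Case 2:} $|\mu_1 - \mu_1'| \ge 2\sigma$. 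If $\mu_1' \ge \mu_1 + 2\sigma$, take $A = [\mu_1 + \sigma,\, \infty)$: the component $\ca{N}(\mu_1', \sigma^2)$ contributes $\frac{1}{2}\Phi(1)$ to $f_{\mu_0',\mu_1'}(A)$, while both components of $f_{\mu_0,\mu_1}$ have means at most $\mu_1$, so each contributes at most $\frac{1}{2}\Phi(-1)$. If instead $\mu_1 \ge \mu_1' + 2\sigma$, take $A = [\mu_1 - \sigma,\, \infty)$: now $\ca{N}(\mu_1, \sigma^2)$ contributes $\frac{1}{2}\Phi(1)$ to $f_{\mu_0,\mu_1}(A)$, and the chain $\mu_0' \le \mu_1' \le \mu_1 - 2\sigma$ places both components of $f_{\mu_0',\mu_1'}$ at distance at least $\sigma$ to the left of the endpoint, bounding each of their contributions by $\frac{1}{2}\Phi(-1)$.

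There is no real technical obstacle; the proof reduces to bookkeeping to verify in each branch that both components of the ``light'' mixture sit at least $\sigma$ on the far side of the chosen cutoff. The ordering assumptions on the means are exactly what is needed to certify this in every branch, and the hypothesis $2\sigma$ separation leaves an extra $\sigma$ of slack to convert means far from $A$ into mass bounds via $\Phi(-1)$. If sharper constants are desired, one can optimize the offset of the cutoff within $(0, 2\sigma)$, but any fixed offset produces an absolute constant lower bound sufficient for the $\Omega(1)$ claim.
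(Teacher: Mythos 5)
Your proof is correct and follows essentially the same route as the paper's: both arguments pick a half-line cutoff at distance $\sigma$ from one component mean, so that one mixture keeps mass at least $\tfrac12\Phi(1)$ while both components of the other mixture sit at least $\sigma$ beyond the cutoff and contribute at most $\Phi(-1)$ in total (the paper writes out only the case $|\mu_0-\mu_0'|\ge 2\sigma$ via explicit Gaussian tail integrals and calls the remaining case analogous, which your Case 2 simply makes explicit). One cosmetic slip in your Case 1: $\mu_0',\mu_1'$ are only guaranteed to lie $\sigma$ (not $2\sigma$) to the right of the cutoff $\mu_0+\sigma$, but that is exactly what the per-component bound $\tfrac12\Phi(-1)$ requires, so nothing breaks.
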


\begin{proof}
Assume that $\left|\mu_0-\mu_0' \right| \ge 2\sigma$, where the case $\left|\mu_1-\mu_1' \right| \ge 2\sigma$ is analogous.
Recall from the definition of TV distance that 
\begin{align*}
    ||f_{\mu_0,\mu_1} - f_{\mu_0',\mu_1'}||_{\s{TV}} &\triangleq   \sup_{\ca{A}\subseteq \Omega}\Big(f_{\mu_0,\mu_1}(\ca{A})-f'_{\mu_0',\mu_1'}(\ca{A})\Big) \\
    &\ge \Pr_{X \sim f_{\mu_0,\mu_1}}[X \le \mu_0+\sigma] - \Pr_{Y \sim f_{\mu_0',\mu_1'}}[Y \le \mu_0+\sigma].
\end{align*}

For a random variable $X\sim f_{\mu_0,\mu_1}$,
let $\ca{E}$ denote the event that  we choose the component with mean $\mu_0$, i.e., if $\widetilde X$ denotes $X$ conditioned on $\ca{E}$, then we have $\widetilde X\sim \ca{N}(\mu_0,\sigma^2)$. Since the mixing weights are equal, we have $\Pr(\ca{E})=\Pr(\ca{E}^c)=1/2$, where $\ca{E}^c$ is the complement of $\ca{E}$. Therefore,
\begin{align}\label{gaussian-ub}
    \nonumber
    \Pr(X \ge \mu_0+\sigma ) &\le \Pr(\ca{E})\Pr(X \ge \mu_0+\sigma \mid \ca{E})+\Pr(\ca{E}^c)  = \frac{1}{2} \int_{\mu_0+\sigma}^{\infty} \frac{e^{-\frac{(t-\mu_0)^2}{2\sigma^2}}}{\sqrt{2\pi}\sigma} dt+\frac{1}{2}  \\
    &\le \frac{1}{2}\int_{\mu_0+\sigma}^{\infty} \Big(\frac{t-\mu_0}{\sigma}\Big) \frac{e^{-\frac{(t-\mu_0)^2}{2\sigma^2}}}{\sqrt{2\pi}\sigma} dt+\frac{1}{2} \le  \frac{1}{2}\cdot\frac{e^{-\frac12}}{\sqrt{2\pi}}+\frac{1}{2}.
\end{align}
Recall that $\mu_0 \leq \mu'_0$, $\mu_0' \le \mu_1'$ and $\left|\mu_0-\mu_0' \right| \ge 2\sigma$.
Again, for a random variable $Y\sim f_{\mu_0',\mu_1'}$, let $\ca{E}'$ denote the event that the component with mean $\mu_0'$ is chosen (and $\ca{E}'^c$ denotes $\mu_1'$ is chosen). 
Then,
\begin{align*}
    \Pr(Y \le \mu_0+\sigma) &= \Pr(\ca{E}')\Pr(Y \le \mu_0+\sigma\mid \ca{E}')+\Pr(\ca{E}'^{c})\Pr(Y \le \mu_0+\sigma\mid \ca{E}'^{c}) \\
    &\stackrel{a}{=} \Pr(\ca{E}')\Pr(Y \le \mu_0'-\sigma\mid \ca{E}')+\Pr(\ca{E}'^{c})\Pr(Y \le \mu_1'-\sigma\mid \ca{E}'^{c}) \\
    &\stackrel{b}{=} \Pr(\ca{E}')\Pr(Y \ge \mu_0'+\sigma\mid \ca{E}')+\Pr(\ca{E}'^{c})\Pr(Y \ge \mu_1'+\sigma\mid \ca{E}'^{c}) \\
    &\stackrel{c}{\le}    \frac{1}{2}\cdot\frac{e^{-\frac12}}{\sqrt{2\pi}}+\frac{1}{2}\cdot\frac{e^{-\frac12}}{\sqrt{2\pi}} = \frac{e^{-\frac12}}{\sqrt{2\pi}}
\end{align*}
where in step (a), we used the fact that $\mu_0'-\sigma \ge \mu_0+\sigma$ and $\mu_1'-\sigma \ge \mu_0+\sigma$; in step (b), we used the symmetry of Gaussian distributions; in step (c), we used the same analysis as in \eqref{gaussian-ub}. By plugging this in the definition of TV distance, we have
\begin{align*}
    ||f_{\mu_0,\mu_1} - f_{\mu_0',\mu_1'}||_{\s{TV}} \ge \Pr_{X \sim f_{\mu_0,\mu_1}}[X \le \mu_0+\sigma] - \Pr_{Y \sim f_{\mu_0',\mu_1'}}[Y \le \mu_0+\sigma] \ge \frac{1}{2}-\sqrt{\frac{9}{8\pi e}} \ge 0.137.
\end{align*}
\end{proof}

If Lemma~\ref{lem:large_gap} does not apply, then we case on whether $\max(\left|\mu_0-\mu_1\right|,\left|\mu_0'-\mu_1'\right|)$ is large or not. If $\max(\left|\mu_0-\mu_1\right|,\left|\mu_0'-\mu_1'\right|) < 100 \sigma$, we use Lemma~\ref{lem: sepmeans2comp}---exactly how will be explained later---and otherwise we use the following lemma. Recall that $\delta_2 = \max(|\mu_0'-\mu_0|, |\mu_1-\mu_1'|) $.

\begin{lemma}{\label{lem:small_prec}}
If $\max(\left|\mu_0-\mu_1\right|,\left|\mu_0'-\mu_1'\right|) \ge 100 \sigma$ and $\max(\left|\mu_0-\mu_0' \right|, \left|\mu_1-\mu_1' \right|) \le 2\sigma$,  then 
\begin{align*}
\sup_t e^{-\frac{\sigma^2 t^2}{2}}\left |e^{i t \mu_0} + e^{i t \mu_1} - e^{i t \mu'_0}-e^{i t \mu'_1} \right | \ge  \frac{\pi^2 \delta_2}{240e\sigma}.
\end{align*}
\end{lemma}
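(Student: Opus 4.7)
The plan is to select a single $t_\ast$ of order $1/\sigma$ at which both factors in the supremum are comfortably bounded below. I would first rewrite
\[
h(t) = e^{it\mu_0} + e^{it\mu_1} - e^{it\mu_0'} - e^{it\mu_1'} = 2i\sin(ta/2)\,e^{it\alpha} + 2i\sin(tb/2)\,e^{it\beta},
\]
where $a = \mu_0-\mu_0'$, $b = \mu_1-\mu_1'$, $\alpha = (\mu_0+\mu_0')/2$, and $\beta = (\mu_1+\mu_1')/2$, so that $|h(t)| = 2\bigl|\sin(ta/2) + \sin(tb/2)\,e^{it(\beta-\alpha)}\bigr|$. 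The two hypotheses force the ``gap frequency'' $\beta - \alpha = \tfrac{1}{2}((\mu_1-\mu_0)+(\mu_1'-\mu_0'))$ to satisfy $\beta - \alpha \ge 96\sigma$: from $\max(|\mu_0-\mu_1|,|\mu_0'-\mu_1'|) \ge 100\sigma$ combined with $\max(|\mu_0-\mu_0'|,|\mu_1-\mu_1'|) \le 2\sigma$, the triangle inequality yields both $\mu_1-\mu_0 \ge 96\sigma$ and $\mu_1'-\mu_0' \ge 96\sigma$.

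Next, I would exploit the resulting separation of scales. Consider the short interval $I = [1/\sigma,\; 1/\sigma + 2\pi/(\beta-\alpha)]$, whose length is at most $\pi/(48\sigma)$. On $I$, the phase $t(\beta-\alpha)$ sweeps out exactly one full period, so $e^{it(\beta-\alpha)}$ attains every value on the unit circle---in particular both $+1$ and $-1$. At the same time, since $|a|,|b| \le 2\sigma$ and $t \le (1/\sigma)(1+\pi/48)$ on $I$, both $ta/2$ and $tb/2$ stay inside $[-\pi/2,\pi/2]$, so $\sin(ta/2)$ and $\sin(tb/2)$ have constant sign on $I$; by monotonicity of $\sin$ on $[0,\pi/2]$ together with the inequality $\sin x \ge 2x/\pi$, their moduli are uniformly bounded below on $I$ by $|a|/(\pi\sigma)$ and $|b|/(\pi\sigma)$ respectively. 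I would then pick $t_\ast \in I$ so that $\sin(t_\ast b/2)\,e^{it_\ast(\beta-\alpha)}$ is a real number of the same sign as $\sin(t_\ast a/2)$: since the section's conventions force $a \le 0$, this means selecting the $t \in I$ with $e^{it(\beta-\alpha)} = -1$ when $b \ge 0$ and with $e^{it(\beta-\alpha)} = +1$ when $b \le 0$. This choice yields
\[
|h(t_\ast)| \;=\; 2\bigl(|\sin(t_\ast a/2)| + |\sin(t_\ast b/2)|\bigr) \;\ge\; \frac{2\max(|a|,|b|)}{\pi\sigma} \;=\; \frac{2\delta_2}{\pi\sigma}.
\]

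Finally, since $t_\ast \le (1/\sigma)(1+\pi/48)$ we obtain $e^{-\sigma^2 t_\ast^2/2} \ge e^{-(1+\pi/48)^2/2} \ge e^{-1}$, which combines with the previous display to give
\[
\sup_t\, e^{-\sigma^2 t^2/2}\,|h(t)| \;\ge\; \frac{2\delta_2}{\pi e\, \sigma} \;\ge\; \frac{\pi^2\,\delta_2}{240\, e\, \sigma},
\]
the final inequality being immediate from $2/\pi > \pi^2/240$ (equivalently $480 > \pi^3$). The step I expect to be the main obstacle is the phase-alignment: one must verify that a single $t_\ast \in I$ simultaneously (i) puts $e^{it(\beta-\alpha)}$ at a favorable $\pm 1$, (ii) preserves the uniform lower bounds on both $|\sin(ta/2)|$ and $|\sin(tb/2)|$ across the short window $I$, and (iii) keeps the Gaussian factor $e^{-\sigma^2 t^2/2}$ bounded below by a constant. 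The mutual compatibility of (i)--(iii) hinges crucially on the quantitative separation of scales $|a|,|b| \le 2\sigma \ll 96\sigma \le \beta-\alpha$ guaranteed by the hypotheses.
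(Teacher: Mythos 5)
Your proof is correct, and it takes a genuinely different route from the paper's. The paper proves this lemma by explicitly constructing a frequency $t=1/(c\sigma)$ with $c$ defined through a floor function of the large gap (e.g.\ $c=\tfrac{\mu_1-\mu_0}{2\pi\sigma\lfloor (\mu_1-\mu_0)\pi/(80\sigma)\rfloor}$), so that the phase of the widely separated mean reduces exactly to $0$ (or to $3\pi/2$ in a second case), and then extracting the $\delta_2/\sigma$ term from the residual small phases via Taylor bounds $\sin x\ge x-x^3/6$, $\cos x\ge 1-x^2/2$; this requires a case split on $\mu_1'>\mu_1$ versus $\mu_1'\le\mu_1$ and on which of $|\mu_0-\mu_0'|,|\mu_1-\mu_1'|$ is larger. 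You instead pair each mean with its counterpart across the two mixtures, factor $h(t)=2i\sin(ta/2)e^{it\alpha}+2i\sin(tb/2)e^{it\beta}$, and observe that the only "large" quantity left is the single relative phase $t(\beta-\alpha)$ with $\beta-\alpha\ge 96\sigma$; sweeping $t$ over one period of this phase inside a window of length at most $\pi/(48\sigma)$ near $1/\sigma$, an intermediate-value argument lets you set $e^{it_*(\beta-\alpha)}=\pm1$ with the sign chosen so the two sine terms add constructively, while Jordan's inequality $\sin x\ge 2x/\pi$ keeps both amplitudes above $|a|/(\pi\sigma)$ and $|b|/(\pi\sigma)$ uniformly on the window (your checks that $|ta/2|,|tb/2|\le 1+\pi/48<\pi/2$, $\sigma t_*\le 1+\pi/48$, and $a\le 0$ under the section's ordering convention are all valid, as is the reduction of the hypotheses to $\mu_1-\mu_0,\mu_1'-\mu_0'\ge 96\sigma$). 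What each approach buys: your grouping plus the existence-based phase alignment eliminates the paper's case analysis and floor-function bookkeeping, handles $a$ and $b$ symmetrically, and yields the stronger constant $2/(\pi e)$ in place of $\pi^2/(240e)$; the paper's construction, on the other hand, exhibits a completely explicit closed-form choice of $t$ in terms of the parameters, at the cost of a longer and more delicate verification.
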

We defer the proof of Lemma~\ref{lem:small_prec} to Appendix~\ref{sec:lemma-proofs}.
Using Lemmas \ref{lem: sepmeans2comp}, \ref{lem:large_gap}, and \ref{lem:small_prec},
we prove Theorem \ref{thm:main1}.

\begin{proof}[Proof of Theorem \ref{thm:main1}]
Using Lemma \ref{lem:chartv}, we see that
\begin{align*}
2\left|\left|f_{\mu_0,\mu_1} -f_{\mu_0',\mu_1'}\right|\right|_{\s{TV}}  \ge \sup_t  \frac{e^{-\frac{\sigma^2 t^2}{2}}}{2} \left |e^{i t \mu_0} + e^{i t \mu_1} - e^{i t \mu'_0}-e^{i t \mu'_1} \right |.
\end{align*}
\paragraph{Case 1:} 
Consider the case when $\mu_0,\mu_1,\mu_0',\mu_1'$ are in an interval of size at most $100\sigma$, i.e., 
\begin{align} \label{eq:case1}
    \max\Big(\left|\mu_1'-\mu_0\right|, \left|\mu_1-\mu_0\right|, \left|\mu_0'-\mu_0\right| \Big) \le 100\sigma. 
\end{align}

Recall $\delta_1 = \max\{|\mu_0-\mu_1|, |\mu_0'-\mu_1'|\} $, 
$\delta_2 = \max(|\mu_0'-\mu_0|, |\mu_1-\mu_1'|) $,
$ \delta_3 = \left| \mu_0+\mu_1-\mu_0'-\mu_1' \right| $,
$ \delta_4 = \min(\left| \mu_0'-\mu_0 \right|, \left| \mu_1'-\mu_1 \right|)$.
For $t=\pi/400\sigma$,
$
0 \le t\max\Big(\left|\mu_1'-\mu_0\right|, \left|\mu_1-\mu_0\right|, \left|\mu_0'-\mu_0\right| \Big) \le \frac{\pi}{4}. 
$
We have assumed that
$\mu_0\le \min(\mu_1,\mu_0',\mu_1')$ and $\mu_0'\leq\mu_1'$. This implies that $\mu_0 \leq \mu_0' \leq \mu_1' \leq \mu_1$ in the subcase when $\mu_0',\mu_1' \in [\mu_0,\mu_1]$. This also implies that $\delta_1= \left|\mu_1-\mu_0\right| \ge 2\delta_4$, a fact we will use later.
The inequality in Eq.~(\ref{eq:case1}) implies that the above value of $t=\pi/400\sigma$ satisfies the conditions of Lemma~\ref{lem: sepmeans2comp}. Then, when $\mu_0',\mu_1' \in [\mu_0,\mu_1]$, the first part of the lemma implies that
\begin{align*}
2\left|\left|f_{\mu_0,\mu_1} -f_{\mu_0',\mu_1'}\right|\right|_{\s{TV}}  \ge \max\Big( \frac{\pi^2(\delta_1-\delta_4)\delta_4}{640000e\sigma^2}, \frac{\pi\delta_3}{3200 \sqrt{2}e \sigma }\Big).
\end{align*}
Now we observe that  $\delta_3 \ge \delta_2-\delta_4$.
To see this, assume without loss of generality that $\delta_2=|\mu_0'-\mu_0|$ and $\delta_4 = \left|\mu_1'-\mu_1\right|$.
By the triangle inequality, we have that $\delta_3=\left| \mu_0+\mu_1-\mu_0'-\mu_1' \right| \ge |\mu_0'-\mu_0|-\left|\mu_1'-\mu_1\right|=\delta_2-\delta_4$.
We split up the calculations based on the value of $\delta_3$. 
If $\delta_3 \ge \frac{\delta_2}{2}$, then
$
||f_{\mu_0,\mu_1} -f_{\mu_0',\mu_1'}||_{\s{TV}}  \ge \pi\delta_2/ (12800 \sqrt{2}e \sigma ).
$
On the other hand, if $\delta_3 \le \frac{\delta_2}{2}$, then since $\delta_3 \ge \delta_2 - \delta_4$, we have that $\delta_4 \ge \frac{\delta_2}{2}$. Coupled with the fact that $\delta_1 \geq 2\delta_4$ (hence $\delta_4 \le \delta_1/2$ implying $\delta_1 -\delta_4 \ge \delta_1/2$), we have that 
$
    ||f_{\mu_0,\mu_1} -f_{\mu_0',\mu_1'}||_{\s{TV}} \ge \pi^2\delta_1\delta_2/(5120000e \sigma^2).
$
Putting these together, we have 
\begin{align*}
    \left|\left|f_{\mu_0,\mu_1} -f_{\mu_0',\mu_1'}\right|\right|_{\s{TV}} \ge \min\Big(\frac{\pi^2\delta_1\delta_2}{5120000e \sigma^2},\frac{\pi\delta_2}{12800 \sqrt{2}e \sigma }\Big) = \frac{\pi^2\delta_1\delta_2}{5120000e \sigma^2} 
\end{align*}

For the case when both of $\mu_0',\mu_1'$ are not in $ [\mu_0,\mu_1]$, we have $\mu_1' > \mu_1$ (recall that $\mu_0$ is the smallest mean and $\mu_0'\leq\mu_1'$), and we can use the second part of Lemma~\ref{lem: sepmeans2comp} to conclude that
\begin{align*}
2\left|\left|f_{\mu_0,\mu_1} -f_{\mu_0',\mu_1'}\right|\right|_{\s{TV}} \ge \sup_t \frac{e^{-\frac{\sigma^2 t^2}{2}}}{2}\left |e^{i t \mu_0} + e^{i t \mu_1} - e^{i t \mu'_0}-e^{i t \mu'_1} \right | \ge \frac{\pi \delta_2}{1600\sqrt{2}e \sigma }.
\end{align*}

\paragraph{Case 2:} Next, consider when $\delta_2 = \max(|\mu_0'-\mu_0|, |\mu_1-\mu_1'|) \ge 2\sigma$. Lemma \ref{lem:large_gap} implies that 
\begin{align*}
    \left|\left|f_{\mu_0,\mu_1}-f_{\mu_0',\mu_1'}\right|\right|_{\s{TV}} \ge \Omega(1).
\end{align*}

\paragraph{Case 3:} Now, we consider the only remaining case, when $\delta_1=\max(\left|\mu_0-\mu_1\right|,\left|\mu_0'-\mu_1'\right|) \ge 100 \sigma$ and $\delta_2 \le \max(\left|\mu_0-\mu_0' \right|, \left|\mu_1-\mu_1' \right|) \le 2\sigma$. This case satisfies the conditions of Lemma \ref{lem:small_prec}, and therefore, we have that
\[
    2\left|\left|f_{\mu_0,\mu_1} -f_{\mu_0',\mu_1'}\right|\right|_{\s{TV}} \ge \sup_t \frac{e^{-\frac{\sigma^2 t^2}{2}}}{2}\left |e^{i t \mu_0} + e^{i t \mu_1} - e^{i t \mu'_0}-e^{i t \mu'_1} \right | \ge \frac{\pi^2 \delta_2}{240e\sigma},
\]
thus proving the theorem.
\end{proof}

\section{Lower Bound on TV Distance of \textit{d}-Dimensional Mixtures}
\label{sec: highdim-tv-lb}

We lower bound the TV distance of high-dimensional mixtures in $\mathcal{F}$ and prove  Theorem~\ref{thm:main_high_dim_tv}. 
For any direction $\f{t} \in \bb{R}^d$,  we denote the projection of the distributions $f_{\f{\mu}_0,\f{\mu}_1}$ and $f_{\f{\mu}_0',\f{\mu}_1'}$ on~$\f{t}$ by $f_{\f{\mu}_0,\f{\mu}_1}^{\f{t}}$ and $f_{\f{\mu}_0',\f{\mu}_1'}^{\f{t}}$, respectively. 
The next lemma allows us to precisely define the projected mixtures.
\begin{lemma}\label{lem:simple}
For a random variable $\f{x} \sim \frac12 \ca{N} (\f{\mu}_0,\f{\Sigma})+\frac12 \ca{N} (\f{\mu}_1,\f{\Sigma})$, for any $\f{t}\in \bb{R}^d$, 
\begin{align*}
\f{t}^T\f{x}  \sim  \frac{\ca{N}(\langle\f{\mu}_0,\f{t}\rangle,\f{t}^T\f{\Sigma}\f{t})}{2}+\frac{\ca{N}(\ip{\f{\mu}_1}{\f{t}},\f{t}^{T}\f{\Sigma}\f{t})}{2}.   
\end{align*}
\end{lemma}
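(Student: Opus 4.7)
The plan is to condition on which mixture component produced $\f{x}$. Let $\ca{E}$ denote the event that $\f{x}$ is drawn from the component with mean $\f{\mu}_0$, so that $\Pr(\ca{E}) = \Pr(\ca{E}^c) = 1/2$ by the definition of the mixture. Conditional on $\ca{E}$ we have $\f{x} \sim \ca{N}(\f{\mu}_0, \f{\Sigma})$ and conditional on $\ca{E}^c$ we have $\f{x} \sim \ca{N}(\f{\mu}_1, \f{\Sigma})$. By the law of total probability, for any measurable $A \subseteq \bb{R}$,
\[
\Pr(\f{t}^T \f{x} \in A) \;=\; \tfrac{1}{2}\,\Pr(\f{t}^T \f{x} \in A \mid \ca{E}) \;+\; \tfrac{1}{2}\,\Pr(\f{t}^T \f{x} \in A \mid \ca{E}^c),
\]
so it suffices to identify each of the two conditional distributions of $\f{t}^T \f{x}$ as the claimed univariate Gaussian.

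For that identification I would invoke the standard fact that any linear functional of a multivariate Gaussian is itself Gaussian with mean and variance given by the obvious formulas. If one prefers to verify it in place, a characteristic-function computation is immediate: for $\f{y} \sim \ca{N}(\f{\mu}, \f{\Sigma})$,
\[
\mathbb{E}\!\left[e^{i s \, \f{t}^T \f{y}}\right] \;=\; \mathbb{E}\!\left[e^{i (s\f{t})^T \f{y}}\right] \;=\; \exp\!\left(i s \,\ip{\f{\mu}}{\f{t}} - \tfrac{1}{2} s^2\, \f{t}^T \f{\Sigma} \f{t}\right),
\]
which we recognize as the characteristic function of $\ca{N}(\ip{\f{\mu}}{\f{t}}, \f{t}^T \f{\Sigma} \f{t})$. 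Since characteristic functions uniquely determine distributions, this pins down each conditional law. Applying the identification with $\f{\mu} = \f{\mu}_0$ and $\f{\mu} = \f{\mu}_1$ and recombining via the displayed total-probability formula above yields the claimed equal-weight mixture.

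There is no genuine obstacle here; the lemma is an immediate consequence of (i) the linear-projection property of multivariate Gaussians and (ii) the compatibility of conditioning with mixture decompositions. The only point worth being careful about is keeping the mixing weights unchanged under projection, which is exactly what the total-probability step records. The lemma is stated mainly to formalize the one-dimensional projected mixture that the high-dimensional reduction in Section~\ref{sec: highdim-tv-lb} feeds into Theorem~\ref{thm:main1} through the data-processing inequality.
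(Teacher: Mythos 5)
Your proof is correct and follows essentially the same route as the paper: decompose the mixture by conditioning on the component, apply the fact that a linear functional of a multivariate Gaussian is Gaussian with mean $\ip{\f{\mu}}{\f{t}}$ and variance $\f{t}^T\f{\Sigma}\f{t}$, and recombine with equal weights. The characteristic-function verification you include is a harmless elaboration of the standard fact the paper simply cites.
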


\begin{proof}
A linear transformation of a multivariate Gaussian is also a Gaussian. For $\f{x} \sim \ca{N}(\f{\mu}_0,\f{\Sigma})$, we see that $\ip{\f{t}}{\f{x}} \sim \ca{N}(\ip{\f{\mu}_0}{\f{t}},\f{t}^T\f{\Sigma}\f{t})$ by a computation of the mean and variance. Similarly, for $\f{x} \sim \ca{N}(\f{\mu}_1,\f{\Sigma})$, we have $\ip{\f{t}}{\f{x}} \sim \ca{N}(\ip{\f{\mu}_1}{\f{t}},\f{t}^T\f{\Sigma}\f{t})$. Putting these together, the claim follows.
\end{proof}

From Lemma \ref{lem:simple}, we can exactly define the one-dimensional mixtures
\begin{align*}
    f_{\f{\mu}_0,\f{\mu}_1}^{\f{t}} = \frac{\ca{N}(\ip{\f{\mu}_0}{\f{t}},\f{t}^T\f{\Sigma}\f{t})}{2}+\frac{\ca{N}(\ip{\f{\mu}_1}{\f{t}},\f{t}^{T}\f{\Sigma}\f{t})}{2} \text{, }  f_{\f{\mu}_0',\f{\mu}_1'}^{\f{t}} = \frac{\ca{N}(\ip{\f{\mu}_0'}{\f{t}},\f{t}^{T}\f{\Sigma}\f{t})}{2}+\frac{\ca{N}(\ip{\f{\mu}_1'}{\f{t}},\f{t}^{T}\f{\Sigma}\f{t})}{2}.
\end{align*}
By using the data processing inequality, or the fact that variational distance is non-increasing under all mappings (see, for instance, Theorem 5.2 in \cite{devroye2012combinatorial}), it follows that
\begin{align*}
    \left|\left| f_{\f{\mu}_0,\f{\mu}_1}-f_{\f{\mu}_0',\f{\mu}_1'} \right|\right|_{\s{TV}} \ge \sup_{\f{t} \in \bb{R}^d} \left|\left| f_{\f{\mu}_0,\f{\mu}_1}^{\f{t}} -f_{\f{\mu}_0',\f{\mu}_1'}^{\f{t}} \right|\right|_{\s{TV}}.
\end{align*}

Let $\ca{H}$ be the set of  permutations on $\{0,1\}$. The following lemma has two cases based on whether the interval defined by one pair of mean's projections is contained in the interval defined by the other pair's projections.
\begin{lemma}\label{lem:high_dim1}

Let $\f{t}\in \bb{R}^d$ be any vector. 
 If $\sqrt{\f{t}^T \f{\Sigma} \f{t}} = \Omega \Big(\max \left(|\ip{\f{t}}{\f{\mu}_0-\f{\mu}_1}|,|\ip{\f{t}}{\f{\mu}_0'-\f{\mu}_1'}| \right)\Big),$ and either $\ip{\f{\mu}_0'}{\f{t}},\ip{\f{\mu}_1'}{\f{t}} \in [\ip{\f{\mu}_0}{\f{t}},\ip{\f{\mu}_1}{\f{t}}]$ or $\ip{\f{\mu}_0}{\f{t}},\ip{\f{\mu}_1}{\f{t}} \in [\ip{\f{\mu}_0'}{\f{t}},\ip{\f{\mu}_1'}{\f{t}}]$, then $||f_{\f{\mu}_0,\f{\mu}_1}^{\f{t}}-f_{\f{\mu}_0',\f{\mu}_1'}^{\f{t}}||_{\s{TV}} $ is at least
    \begin{align*}
        \Omega\Big( \hspace{-1mm}\min\Big(1, \frac{1}{\f{t}^T \f{\Sigma} \f{t}} \cdot 
         \max \hspace{-1mm} \left(|\ip{\f{t}}{\f{\mu}_0-\f{\mu}_1}|,|\ip{\f{t}}{\f{\mu}_0'-\f{\mu}_1'}| \right) \min_{\sigma \in \ca{H}}\max \hspace{-1mm} \left(|\ip{\f{t}}{\f{\mu}_0-\f{\mu}_{\sigma(0)}'}|,|\ip{\f{t}}{\f{\mu}_1-\f{\mu}_{\sigma(1)}'}|   \right)
        \Big)\Big).
    \end{align*}
 Otherwise, we have that
$||f_{\f{\mu}_0,\f{\mu}_1}^{\f{t}}-f_{\f{\mu}_0',\f{\mu}_1'}^{\f{t}}||_{\s{TV}} $ is at least
\begin{align*}
    \Omega\Big(\min\Big(1, \frac{1}{\sqrt{\f{t}^T \f{\Sigma} \f{t}}} \cdot \min_{\sigma \in \ca{H}} 
     \max(|\ip{\f{t}}{\f{\mu}_0-\f{\mu}_{\sigma(0)}'}|,| \ip{\f{t}}{\f{\mu}_1-\f{\mu}_{\sigma(1)} '}|)
    \Big)\Big).
\end{align*}

\end{lemma}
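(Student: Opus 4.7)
The plan is to reduce this lemma to a direct application of the one-dimensional Theorem \ref{thm:main1} to the projected mixtures identified by Lemma \ref{lem:simple}, with the only nontrivial bookkeeping being the relabeling that reconciles the free pairing in $\ca{H}$ with the canonical ordering assumed in Theorem \ref{thm:main1}. Concretely, set $\sigma_{\f{t}}^2 = \f{t}^T\f{\Sigma}\f{t}$, $a_j = \ip{\f{\mu}_j}{\f{t}}$, and $b_j = \ip{\f{\mu}_j'}{\f{t}}$ for $j\in\{0,1\}$, so that by Lemma \ref{lem:simple} the projected mixtures are the one-dimensional two-component, equally weighted, shared-variance mixtures
\[
f_{\f{\mu}_0,\f{\mu}_1}^{\f{t}} = \tfrac{1}{2}\ca{N}(a_0,\sigma_{\f{t}}^2) + \tfrac{1}{2}\ca{N}(a_1,\sigma_{\f{t}}^2),\qquad f_{\f{\mu}_0',\f{\mu}_1'}^{\f{t}} = \tfrac{1}{2}\ca{N}(b_0,\sigma_{\f{t}}^2) + \tfrac{1}{2}\ca{N}(b_1,\sigma_{\f{t}}^2),
\]
both in $\ca{F}$. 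These are precisely the objects to which Theorem \ref{thm:main1} applies.

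Next, since the mixing weights within each mixture are equal, swapping $(a_0,a_1)$ or $(b_0,b_1)$ does not alter the distribution; likewise, swapping which mixture is primed does not alter the TV distance. I would use this freedom to renormalize so that $a_0 \le a_1$, $b_0 \le b_1$, and $a_0 = \min(a_0,a_1,b_0,b_1)$, which matches Theorem \ref{thm:main1}'s standing assumptions. Under this normalization, the Theorem \ref{thm:main1} parameter $\delta_1 = \max(|a_0-a_1|,|b_0-b_1|)$ is exactly $\max(|\ip{\f{t}}{\f{\mu}_0-\f{\mu}_1}|,|\ip{\f{t}}{\f{\mu}_0'-\f{\mu}_1'}|)$, the hypothesis $\sigma_{\f{t}} = \Omega(\delta_1)$ matches the current lemma's variance hypothesis, and the interval-containment dichotomy $[b_0,b_1]\subseteq[a_0,a_1]$ (vs.\ not) agrees with the dichotomy posed in the current lemma (up to the intra-/inter-mixture swaps already performed, all of which preserve the interval-containment predicate).

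It then remains to identify $\delta_2 = \max(|a_0-b_0|,|a_1-b_1|)$ with $\min_{\sigma\in\ca{H}}\max(|\ip{\f{t}}{\f{\mu}_0-\f{\mu}_{\sigma(0)}'}|,|\ip{\f{t}}{\f{\mu}_1-\f{\mu}_{\sigma(1)}'}|)$, which amounts to a small and self-contained claim: for any reals $a_0\le a_1$ and $b_0\le b_1$ one has $\max(|a_0-b_0|,|a_1-b_1|)\le\max(|a_0-b_1|,|a_1-b_0|)$, i.e., the order-preserving matching minimizes the max pairwise distance. I would prove this by a three-case analysis on the relative ordering of $\{a_0,a_1\}$ and $\{b_0,b_1\}$ (disjoint, interlaced, or nested), in each case bounding both terms on the left by the larger term on the right. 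This is the one step that I anticipate requires any real verification; everything else is substitution.

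With these pieces in place, Theorem \ref{thm:main1} yields $\Omega(\min(1,\delta_1\delta_2/\sigma_{\f{t}}^2))$ in the containment branch and $\Omega(\min(1,\delta_2/\sigma_{\f{t}}))$ in the complementary branch; translating $\delta_1$ and $\delta_2$ back through the identifications above gives, respectively, the first and second claims of the lemma in the form stated. No further appeal to the characteristic-function machinery of Section \ref{app:complex} is needed here since it is already encapsulated in Theorem \ref{thm:main1}; the entire contribution of this lemma is to lift the one-dimensional bound along the linear functional $\f{x}\mapsto\f{t}^T\f{x}$, which acts correctly on both the means and (quadratically) on the covariance.
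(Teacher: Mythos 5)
Your proposal is correct and follows essentially the same route as the paper: project via Lemma~\ref{lem:simple}, then invoke Theorem~\ref{thm:main1} on the resulting one-dimensional mixtures, with the minimum over $\ca{H}$ absorbing the ordering assumptions of that theorem. The only difference is that you spell out the relabeling and the order-preserving-matching identification of $\delta_2$ explicitly, which the paper's two-sentence proof leaves implicit.
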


\begin{proof}
The proof follows directly from Theorem \ref{thm:main1}. Note that in Theorem \ref{thm:main1}, we assumed the ordering of the means without loss of generality, i.e., $\mu_0\le \min(\mu_1,\mu_0',\mu_1')$ and $\mu_0'<\mu_1'$. However, taking a minimum over the set of permutations in $\ca{H}$ allows us to restate the theorem in its full generality.
\end{proof}

Now we are ready to provide the proof of Theorem \ref{thm:main_high_dim_tv}.

\subsection{Proof of Theorem \ref{thm:main_high_dim_tv}}

Let $$S_1= \{\f{\mu}_1-\f{\mu}_0,\f{\mu}_1'-\f{\mu}_0'\},   \quad  S_2 = \{\f{\mu}_0'-\f{\mu}_0,\f{\mu}_1'-\f{\mu}_1\}, \quad  S_3 = \{\f{\mu}_0'-\f{\mu}_1,\f{\mu}_1'-\f{\mu}_0\},$$
and
$$\f{v}_1 = \s{argmax}_{s \in S_1} || \f{s}||_2, \quad \f{v}_2 = \s{argmax}_{s \in S_2} || \f{s}||_2, \quad  \f{v}_3 = \s{argmax}_{s \in S_3} || \f{s}||_2.$$

We consider two cases below. Depending on the norm of $\f{v}_1$, we modify our choice of projection direction. In the first case, we do not have a guarantee on the ordering of the means, so we use the first part of Lemma~\ref{lem:high_dim1}. In the second case, we can use the better bound in the second part of the lemma after arguing about the arrangement of the means. 

\paragraph{Case 1 ($2\left|\left|\f{v}_1\right|\right|_2 \ge \min(\left|\left|\f{v}_2\right|\right|_2,\left|\left|\f{v}_3\right|\right|_2)$ and $\sqrt{\lambda_{\Sigma,\ca{U}}}=\Omega(\left|\left|\f{v}_1\right|\right|_2)$):} 
We start with a lemma that shows the existence of a vector $\f{z}$ that is correlated with $\{\f{v}_1,\f{v}_2,\f{v}_3\}$. We use $\f{z}$ to define the direction $\f{t}$ to project the means on, while roughly preserving their pairwise distances.
\begin{lemma}\label{lem:exists}
For $\f{v}_1,\f{v}_2,\f{v}_3$ defined above,
there exists a vector $\f{z}\in \bb{R}^d$  such that $\left|\left|\f{z}\right|\right|_2 \le 10$, $\f{z}$ belongs to the subspace spanned by  $\f{v}_1,\f{v}_2,\f{v}_3$, 
 and
$
    |\langle \f{z},\f{v} \rangle| \ge \frac{||\f{v}||_2}{6}$ for all  $\f{v}\in \{\f{v}_1,\f{v}_2,\f{v}_3\}.$
\end{lemma}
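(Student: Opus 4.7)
The plan is to find $\f{z}$ by a probabilistic argument on the unit sphere of the subspace $V=\operatorname{span}\{\f{v}_1,\f{v}_2,\f{v}_3\}$. First I would normalize: setting $\hat{\f{v}}_i=\f{v}_i/\|\f{v}_i\|_2$, the condition $|\langle \f{z},\f{v}_i\rangle|\ge\|\f{v}_i\|_2/6$ is equivalent to $|\langle \f{z},\hat{\f{v}}_i\rangle|\ge 1/6$, so it suffices to exhibit some $\f{z}\in V$ with $\|\f{z}\|_2\le 10$ and $|\langle \f{z},\hat{\f{v}}_i\rangle|\ge 1/6$ for each $i\in\{1,2,3\}$.

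A key structural observation is that $V$ has dimension $k\le 3$: each $\f{v}_i$ is a difference of two of the four means $\f{\mu}_0,\f{\mu}_1,\f{\mu}_0',\f{\mu}_1'$, whose affine hull has dimension at most $3$. The case $k=1$ is trivial: every $\hat{\f{v}}_i=\pm\hat{\f{v}}_1$, so $\f{z}=\hat{\f{v}}_1$ already yields $\|\f{z}\|_2=1$ and inner products of magnitude $1$. For $k\in\{2,3\}$ I would draw $\f{z}$ uniformly from the unit sphere of $V$ and apply a union bound over the three indices $i$.

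The heart of the argument is an anti-concentration estimate on a single projection, which follows from classical formulas for the marginals of the uniform measure on a sphere. When $k=3$, Archimedes' hat-box theorem tells us that $\langle \f{z},\hat{\f{v}}_i\rangle$ is uniform on $[-1,1]$, so $\Pr[|\langle \f{z},\hat{\f{v}}_i\rangle|<1/6]=1/6$; when $k=2$, the projection equals $\cos\theta$ for $\theta$ uniform on $[0,2\pi)$, giving $\Pr[|\langle \f{z},\hat{\f{v}}_i\rangle|<1/6]=(2/\pi)\arcsin(1/6)<1/9$. In either case, summing the three failure probabilities yields at most $3\cdot 1/6 = 1/2<1$, so a unit vector $\f{z}\in V$ satisfying all three required bounds exists. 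Such $\f{z}$ has norm $1\le 10$ and lies in $\operatorname{span}\{\f{v}_1,\f{v}_2,\f{v}_3\}$, completing the proof.

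The only real subtlety is to sample $\f{z}$ on the sphere of $V$ itself rather than of $\mathbb{R}^d$, so that the correct marginal distribution applies (in particular for $k=2$, the $\cos\theta$ density is concentrated near $\pm 1$, and one must verify that not too much mass falls inside the small window $(-1/6,1/6)$). Since the lemma permits $\|\f{z}\|_2$ up to $10$, the bound is quite slack, and an explicit construction via sign combinations $\f{z}=\sum_i\epsilon_i\hat{\f{v}}_i$ with $\epsilon_i\in\{\pm 1\}$, analyzed via Paley--Zygmund on the resulting Rademacher sums, is possible as well; however, the probabilistic/sphere approach is cleaner because it bypasses a case analysis on the angles $\langle\hat{\f{v}}_i,\hat{\f{v}}_j\rangle$. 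This anti-concentration step is the main piece I would focus the writeup on.
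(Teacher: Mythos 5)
Your argument is correct, and it shares the paper's skeleton: a probabilistic-method choice of a random direction inside $\operatorname{span}\{\f{v}_1,\f{v}_2,\f{v}_3\}$ followed by a union bound over the three anti-concentration events. The difference lies in the random model and hence in the key estimate. The paper draws $\f{z}=p\f{u}_1+q\f{u}_2+r\f{u}_3$ with i.i.d.\ standard Gaussian coefficients in an orthonormal basis of the span, so that $\langle\f{z},\f{v}\rangle\sim\ca{N}(0,\left|\left|\f{v}\right|\right|_2^2)$ and the per-vector failure probability is at most $1/(3\sqrt{2\pi})$ just by bounding the Gaussian density; it then needs an extra step (Gaussian tail bounds on $p,q,r$) to guarantee $\left|\left|\f{z}\right|\right|_2\le 10$, which is exactly where the constant $10$ comes from. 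You instead sample uniformly on the unit sphere of the span and use exact marginals (Archimedes' hat-box theorem in dimension $3$, the arcsine computation $\frac{2}{\pi}\arcsin(1/6)<1/9$ in dimension $2$, a trivial case in dimension $1$), giving per-vector failure probabilities of $1/6$, respectively less than $1/9$, a union bound of at most $1/2<1$, and a witness with $\left|\left|\f{z}\right|\right|_2=1$; the norm-tail step disappears entirely, and your proof makes visible that the constant $10$ is slack. The price is the small case analysis on the dimension of the span, which the Gaussian computation handles essentially uniformly. One cosmetic point for the writeup: if some $\f{v}_i=\f{0}$ (e.g.\ when $\f{\mu}_0=\f{\mu}_1$ and $\f{\mu}_0'=\f{\mu}_1'$), the normalization $\hat{\f{v}}_i$ is undefined, but the required inequality is vacuous for that index, so such indices can simply be discarded before sampling; the paper's proof relies on the same implicit convention.
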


\begin{proof} We use the probabilistic method.
 Let $\f{u}_1,\f{u}_2,\f{u}_3$ be orthonormal vectors forming a basis of the subspace spanned by $\f{v}_1,\f{v}_2$ and $\f{v}_3$; hence, we can write the vectors $\f{v}_1,\f{v}_2$ and $\f{v}_3$ as a linear combination of $\f{u}_1,\f{u}_2,\f{u}_3$. Let us define a vector $\f{z}$ randomly generated from the subspace spanned by $\f{v}_1,\f{v}_2,\f{v}_3$ as follows. Let $p,q,r$ be independently sampled according to $\ca{N}(0,1)$. Then, define
$
    \f{z} = p\f{u}_1+q\f{u}_2+r\f{u}_3.$
By this construction, we have that 
$
    \langle \f{z}, \f{v} \rangle \sim \ca{N}(0,\left|\left|\f{v}\right|\right|_2^2)$ for  all vectors  $\f{v}\in \{\f{v}_1,\f{v}_2,\f{v}_3\}, 
$
and further, $\left|\left|\f{z}\right|\right|_2^2 = p^2+q^2+r^2$. Hence, for any $\f{v}\in \{\f{v}_1,\f{v}_2,\f{v}_3\}$, we have 
\begin{align*}
    \Pr\left (|\langle \f{z},\f{v} \rangle| \le ||\f{v}||_2/6 \right ) &\le \int_{-\frac{||\f{v}||_2}{6}}^{\frac{||\f{v}||_2}{6}} \frac{e^{-x^2/2||\f{v}||_2^2}}{\sqrt{2\pi ||\f{v}||_2^2}}dx\\
    &\le \int_{-\frac{||\f{v}||_2}{6}}^{\frac{||\f{v}||_2}{6}} \frac{1}{\sqrt{2\pi ||\f{v}||_2^2}}dx \le \frac{||\f{v}||_2}{3\sqrt{2\pi ||\f{v}||_2^2}} \le  \frac{1}{3\sqrt{2\pi}}. 
\end{align*}
Also, we can bound the norm of $z$ by bounding $p,q,r$. We see that
\begin{align*}
\Pr(p>5) \le  \int_{5}^{\infty} \frac{e^{-x^2/2}}{\sqrt{2\pi}}dx  \le \frac{1}{5} \int_{5}^{\infty} \frac{xe^{-x^2/2}}{\sqrt{2\pi}}dx \le  \frac{e^{-12.5}}{5}.     
\end{align*}
Similarly, $\Pr(p<-5) \le e^{-12.5}/5$. Applying the same calculations to $q$ and $r$ and taking a union bound, we must have that with positive probability $\left|\left|\f{z}\right|\right|_2 \le \sqrt{p^2+q^2+r^2} \le \sqrt{75} \le 10$ and
$
    |\langle \f{z},\f{v} \rangle| \ge ||\f{v}||_2/6$  for  all $\f{v}\in \{\f{v}_1,\f{v}_2,\f{v}_3\},$
implying there exists a vector $\f{z}$ that satisfies the claim.
\end{proof}
For this case, we will use the first part of Lemma \ref{lem:high_dim1}.
Let $\f{z}$ be the vector guaranteed by Lemma~\ref{lem:exists}. Setting $\f{t} = \frac{\f{z}}{\sqrt{\f{z}^{T}\f{\Sigma} \f{z}}}$,
then Lemma~\ref{lem:exists} implies that
\begin{align*}
    |\langle \f{t},\f{v} \rangle| = \frac{|\langle \f{z},\f{v} \rangle|}{\sqrt{\f{z}^{T}\f{\Sigma} \f{z}}}  \ge \frac{||\f{v}||_2}{6\sqrt{\f{z}^{T}\f{\Sigma} \f{z}}} \qquad  \text{ for  all } \f{v}\in \{\f{v}_1,\f{v}_2,\f{v}_3\}.
\end{align*}

Recall that we defined $\lambda_{\Sigma,\ca{U}} \triangleq \max_{\substack{\left|\left|\f{u}\right|\right|_2=1 \\ \f{u} \in \s{span}(\f{v}_1,\f{v}_2,\f{v}_3)}} \f{u}^{T}\Sigma\f{u}$ to be the maximum amount a unit norm vector $\f{u}$ belonging to the span of the vectors $\f{v}_1,\f{v}_2,\f{v}_3$ is stretched by the matrix $\f{\Sigma}$. Note that $\lambda_{\Sigma,\ca{U}}$ is also upper bounded by the maximum eigenvalue of $\f{\Sigma}$. Now,
using the fact that $\left|\left|\f{z}\right|\right|_2\leq 10$ and $\sqrt{\f{z}^{T}\f{\Sigma} \f{z}} \le \sqrt{\lambda_{\Sigma,\ca{U}}}\left|\left|\f{z}\right|\right|_2 \leq 10\sqrt{\lambda_{\Sigma,\ca{U}}}$, we obtain 
\begin{align}\label{eq:inner-lb}
    |\langle \f{t},\f{v} \rangle| \ge \frac{||\f{v}||_2}{60\sqrt{\lambda_{\Sigma,\ca{U}}}} \qquad \text{ for  all } \f{v}\in \{\f{v}_1,\f{v}_2,\f{v}_3\}.
\end{align}
\noindent
The part of Lemma~\ref{lem:high_dim1} that we use depends on whether
$\sqrt{\f{t}^T \f{\Sigma} \f{t}} = \Omega \Big(\max \left(|\ip{\f{t}}{\f{v}_1}| \right)\Big)$ or not. However, the second part of the lemma is stronger and implies the first part. Therefore, we simply use the lower bound in the first part of the lemma, and we see that 
\begin{align*}
&\left|\left|f_{\f{\mu}_0,\f{\mu}_1}^{\f{t}}-f_{\f{\mu}_0',\f{\mu}_1'}^{\f{t}} \right|\right|_{\s{TV}} \\ 
&= \Omega\Big(\min\Big(1, 
    \max \left(|\ip{\f{t}}{\f{\mu}_0-\f{\mu}_1}|,|\ip{\f{t}}{\f{\mu}_0'-\f{\mu}_1'}| \right) \min_{\sigma \in \ca{H}}\max  \left(|\ip{\f{t}}{(\f{\mu}_0-\f{\mu}_{\sigma(0)}')}|,|\ip{\f{t}}{(\f{\mu}_1-\f{\mu}_{\sigma(1)}')}|   \right)
    \Big)\Big) \\
    &\stackrel{(a)}{=} \Omega\Big(\min\Big(1,
    \left|\ip{\f{t}}{\f{v}_1}\right| \min (\left|\ip{\f{t}}{\f{v}_2}\right|,\left|\ip{\f{t}}{\f{v}_3}\right|)
    \Big)\Big) 
    \stackrel{(b)}{=} \Omega\Big(\min\Big(1, \frac{\|\f{v}_1\|_2\min(\|\f{v}_2\|_2, \|\f{v}_3\|_2)}{\lambda_{\Sigma,\ca{U}}} \Big)\Big),
\end{align*}
wherein step (a), we used the following facts (from definitions):
\begin{align}
    \label{eq:fact1}
    &\max \left(|\ip{\f{t}}{\f{\mu}_0-\f{\mu}_1}|,|\ip{\f{t}}{\f{\mu}_0'-\f{\mu}_1'}| \right) \ge \left|\ip{\f{t}}{\f{v}_1}\right| \\ \label{eq:fact2}   
    & \max  \left(|\ip{\f{t}}{(\f{\mu}_0-\f{\mu}_{0}')}|,|\ip{\f{t}}{(\f{\mu}_1-\f{\mu}_{1}')}|   \right) \ge \left|\ip{\f{t}}{\f{v}_2}\right| \\ \label{eq:fact3}
    & \max  \left(|\ip{\f{t}}{(\f{\mu}_0-\f{\mu}_{1}')}|,|\ip{\f{t}}{(\f{\mu}_1-\f{\mu}_{0}')}|   \right) \ge \left|\ip{\f{t}}{\f{v}_3}\right|
\end{align}
and in step (b), we used Eq.~\eqref{eq:inner-lb} for each $\f{v}\in \{\f{v}_1,\f{v}_2,\f{v}_3\}$.

\paragraph{Case 2 ($2\left|\left|\f{v}_1\right|\right|_2 \le \min(\left|\left|\f{v}_2\right|\right|_2,\left|\left|\f{v}_3\right|\right|_2)$ or  $\sqrt{\lambda_{\Sigma,\ca{U}}}=O(\left|\left|\f{v}_1\right|\right|_2)$):}
For this case, we will use the second part of Lemma \ref{lem:high_dim1}.
The random choice of $\f{t}$ in Case 1 would have been sufficient for using the second part of Lemma \ref{lem:high_dim1} when $c\left|\left|\f{v}_1\right|\right|_2 \le \min(\left|\left|\f{v}_2\right|\right|_2,\left|\left|\f{v}_3\right|\right|_2)$ or  $\sqrt{\lambda_{\Sigma,\ca{U}}}=O(\left|\left|\f{v}_1\right|\right|_2)$ for some large constant $c$ but with a deterministic choice of $\f{t}$ that is described below, we can show that $c=2$ is sufficient.
 Let $\f{t} = \frac{\f{v}}{\sqrt{\f{v}^{T}\f{\Sigma} \f{v}}}$, where
 \begin{align*}
     \f{v}= \frac{\f{v}_2}{\left|\left|\f{v}_2\right|\right|_2}+\frac{s\f{v}_3}{\left|\left|\f{v}_3\right|\right|_2} \text{ with } s=\s{argmax}_{u \in\{-1,+1\}} \langle \f{v}_2,u\f{v}_3 \rangle.
 \end{align*}
Notice that we must have $s\langle \f{v}_2, \f{v}_3 \rangle >0$ from the definition of $s$.
Then we see that 
\begin{align}
    \nonumber 
    \left|\langle \f{v},\f{v}_1 \rangle\right| &= \left| \frac{\langle \f{v}_2, \f{v}_1 \rangle}{\left|\left|\f{v}_2\right|\right|_2}+\frac{s\langle \f{v}_3, \f{v}_1 \rangle}{\left|\left|\f{v}_3\right|\right|_2} \right| \le 2\left|\left|\f{v}_1\right|\right|_2 \le \min(\left|\left|\f{v}_2\right|\right|_2,\left|\left|\f{v}_3\right|\right|_3) \\
    \label{eq:fact5}
    \left|\langle\f{v},\f{v}_2\rangle\right| &= \left| \left|\left|\f{v}_2\right|\right|_2 +\frac{s\langle \f{v}_2,\f{v}_3 \rangle}{\left|\left|\f{v}_3\right|\right|_2} \right| \ge \left|\left|\f{v}_2\right|\right|_2  \\
    \label{eq:fact6}
    \left|\langle\f{v},\f{v}_3\rangle \right| &=  \left|\frac{\langle \f{v}_2,\f{v}_3 \rangle}{\left|\left|\f{v}_2\right|\right|_2} + s\left|\left|\f{v}_3\right|\right|_2 \right| = \left|\frac{s\langle \f{v}_2,\f{v}_3 \rangle}{\left|\left|\f{v}_2\right|\right|_2} + \left|\left|\f{v}_3\right|\right|_2 \right|  \ge \left|\left|\f{v}_3\right|\right|_2.  
\end{align}
The first inequality follows the norm bound on $\f{v}_1$ for this case, the second inequality uses that the definition of $\f{v}$ and $s$ imply that the second term in the sum is non-negative, and the third inequality uses the same logic and the fact that $s \in \{-1,1\}$.

We just showed that $\left|\langle\f{v},\f{v}_1\rangle\right| \leq \min(\left|\langle\f{v},\f{v}_2\rangle\right|, \left|\langle\f{v},\f{v}_3\rangle\right|)$, and hence 
 $\langle \f{t},\f{v}_1 \rangle \le \min(\langle \f{t},\f{v}_2  \rangle, \langle \f{t},\f{v}_3 \rangle)$.
This implies that the interval defined by one pair of projected means is not contained within the interval defined by the other pair of projected means.
This means we can use the second part of Lemma~\ref{lem:high_dim1}. Furthermore, we  also have $\f{t}^{T}\Sigma \f{t}=1$. Finally, since $\left|\left|\f{v}\right|\right|_2 \le 2$, note that $\sqrt{\f{v}^{T}\Sigma\f{v}} \le 2\sqrt{\lambda_{\Sigma,\ca{U}}}$. Using Lemma \ref{lem:high_dim1} with our choice of $\f{t}$, we see that
\begin{align*}
&\left|\left|f_{\f{\mu}_0,\f{\mu}_1}^{\f{t}}-f_{\f{\mu}_0',\f{\mu}_1'}^{\f{t}} \right|\right|_{\s{TV}} \\
&= \Omega\Big(\min\Big(1,\min_{\sigma \in \ca{H}} 
      \max(|\ip{\f{t}}{(\f{\mu}_0-\f{\mu}_{\sigma(0)}')}|,|\ip{\f{t}}{(\f{\mu}_1-\f{\mu}_{\sigma(1)} '})|)
    \Big)\Big) \\
&\stackrel{(a)}{=} \Omega\Big(\min\Big(1,\min\Big(\left|\ip{\f{t}}{\f{v}_2}\right|,\left|\ip{\f{t}}{\f{v}_3}\right|\Big)
    \Big)\Big)  \stackrel{(b)}{=} \Omega\Big(\min\Big(1, \frac{\min(\left|\left|\f{v}_2\right|\right|_2,\left|\left|\f{v}_3\right|\right|_2)}{\sqrt{\lambda_{\Sigma,\ca{U}}}} \Big) \Big).
\end{align*}
In step (a), we used Eq.~(\ref{eq:fact2}) and (\ref{eq:fact3}), while in step (b), we used Eq.~(\ref{eq:fact5}) and (\ref{eq:fact6}).
The remaining case is when $\sqrt{\lambda_{\Sigma,\ca{U}}}=O(\left|\left|\f{v}_1\right|\right|_2)$. The second part of Lemma \ref{lem:high_dim1} applies because we observe that $\sqrt{\f{t}^T \f{\Sigma} \f{t}} = O \Big(\max \left(|\ip{\f{t}}{\f{\mu}_0-\f{\mu}_1}|,|\ip{\f{t}}{\f{\mu}_0'-\f{\mu}_1'}| \right)\Big)$. To see this, recall that $\f{t}^T\f{\Sigma}\f{t}=1$, and hence,
\begin{align*}
    & \max \left(|\ip{\f{t}}{\f{\mu}_0-\f{\mu}_1}|,|\ip{\f{t}}{\f{\mu}_0'-\f{\mu}_1'}| \right)\\
    &\ge\left|\ip{\f{t}}{\f{v}_1}\right| = \left|\frac{\f{z}^{T}\f{v}_1}{\sqrt{\f{z}^{T}\f{\Sigma} \f{z}}}\right| \ge \frac{\left|\left|\f{v}_1\right|\right|_2}{6\sqrt{\f{z}^{T}\f{\Sigma} \f{z}}} \ge \frac{\left|\left|\f{v}_1\right|\right|_2}{6\sqrt{\lambda_{\Sigma,\ca{U}}}} =\Omega(1) = \Omega\left(\sqrt{\f{t}^T\f{\Sigma}\f{t}}\right).
\end{align*}

Next, recall that Lemma \ref{lem:exists} implies that $\left|\ip{\f{t}}{\f{v}_2}\right| \ge ||\f{v}_2||/6$ and $\left|\ip{\f{t}}{\f{v}_3}\right| \ge ||\f{v}_3||/6$. Then, using the second part of Lemma \ref{lem:high_dim1}, we have that
\begin{align*}
&\left|\left|f_{\f{\mu}_0,\f{\mu}_1}^{\f{t}}-f_{\f{\mu}_0',\f{\mu}_1'}^{\f{t}} \right|\right|_{\s{TV}} \\
&= \Omega\Big(\min\Big(1,\min_{\sigma \in \ca{H}} 
      \max(|\ip{\f{t}}{(\f{\mu}_0-\f{\mu}_{\sigma(0)}')}|,|\f{t}^{T}(\f{\mu}_1-\f{\mu}_{\sigma(1)} ')|)
    \Big)\Big) \\
&\stackrel{(a)}{=} \Omega\Big(\min\Big(1,\min\Big(\left|\ip{\f{t}}{\f{v}_2}\right|,\left|\ip{\f{t}}{\f{v}_3}\right|\Big)
    \Big)\Big)  \stackrel{(b)}{=} \Omega\Big(\min\Big(1, \frac{\min(\left|\left|\f{v}_2\right|\right|_2,\left|\left|\f{v}_3\right|\right|_2)}{\sqrt{\lambda_{\Sigma,\ca{U}}}} \Big) \Big).
\end{align*}
Again in step (a), we used Eq.~(\ref{eq:fact2}) and (\ref{eq:fact3}) while in step (b), we used Eq.~(\ref{eq:fact5}) and (\ref{eq:fact6}).  This completes the proof of Theorem \ref{thm:main_high_dim_tv}.

\section{Conclusion and Open Questions}

We demonstrated the use of complex analytic tools to prove new lower bounds on the total variation distance between any two Gaussian mixtures with two equally weighted components and shared component variance. For a pair of mixtures with shared component variance, we provide guarantees on the total variation distance as a function of the largest gap (among the two mixtures) between the component means. Although intuitive, such a characterization was missing despite a vast literature on the total variation distance between mixtures of Gaussians with two components. We also extended our results to high dimensions and showed an elegant way via characteristic functions to reduce the problem to the one-dimensional setting. Finally, we should also point out that our lower bounds hold for all pairs of Gaussian mixtures with shared component covariance matrix without any assumptions on the component means; this was not the case in the prior results, which either needed the means to be bounded or the means of both mixtures to be zero.

The complex analytic tools in this work are elementary, and there is room for development. These tools may be helpful in proving bounds on statistical distance between more diverse distributions. 
For example, our analytic techniques do extend to mixtures of two Gaussians with shared covariance and certain non-equal mixing weights. To give a specific instance, for a mixture with weights $c_0$ and $c_1$, we could replace Lemma \ref{lem: sepmeans2comp} so the lower bound only gains an additional multiplicative factor of $\min\{c_0,c_1\}$ when  $\mu_1' > \mu_1$.
We avoided stating our results in full generality of the mixing weights to not complicate our techniques and results.
 It would be useful and interesting to provide matching upper bounds on the total variation distance of two-component mixtures for all instances as a function of the means and covariance (generalizing the results for single Gaussians~\cite{devroye2018total}). Extending our results to more general mixtures with $k$ components and unknown component variances/weights (e.g., for Gaussian or even other families of distributions, such as those studied in~\cite{krishnamurthy20a}) will be of significant interest to both the statistics and machine learning communities.

\noindent \paragraph{Acknowledgement:} The work of A.~Mazumdar and S.~Pal is supported in part by NSF awards 2133484, 2127929, and 1934846.

\bibliographystyle{plain}

\appendix

\section{Missing Proofs from Section 3}\label{sec:lemma-proofs}
Here, we provide the proofs for Lemmas \ref{lem: sepmeans2comp} and \ref{lem:small_prec}. Recall that we have indexed the means such that $\mu_0\le \min(\mu_1,\mu_0',\mu_1')$ and $\mu_0'<\mu_1'$.

\begin{proof}[Proof of Lemma~\ref{lem: sepmeans2comp}]
We use case analysis on different orderings of the means and their separations.

\begin{claim}
For any $t>0$ such that $t(\mu_1-\mu_0),t(\mu_1'-\mu_0),t(\mu_1'-\mu_0) \in [0,\frac{\pi}{4}]$, when $\mu_1' > \mu_1$,
\begin{align*}
\left |e^{i t \mu_0} + e^{i t \mu_1} - e^{i t \mu'_0}-e^{i t \mu'_1} \right | \ge \frac{t\delta_2}{2\sqrt{2}}.
\end{align*}
\end{claim}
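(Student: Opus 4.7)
My plan is to lower bound $|h(t)|$ where $h(t)=e^{it\mu_0}+e^{it\mu_1}-e^{it\mu_0'}-e^{it\mu_1'}$ by looking only at its imaginary part. First, since $|h(t)|$ is translation-invariant (multiply by $|e^{-it\mu_0}|=1$), I may assume $\mu_0=0$, so that $0\le\mu_0',\mu_1,\mu_1'\le\pi/(4t)$ thanks to the hypothesis $t(\mu_1-\mu_0),t(\mu_1'-\mu_0),t(\mu_0'-\mu_0)\le\pi/4$. Writing
\[
\mathrm{Im}(h(t))=[\sin(t\mu_0)-\sin(t\mu_0')]+[\sin(t\mu_1)-\sin(t\mu_1')],
\]
I apply the sum-to-product identity $\sin A-\sin B=2\cos\!\frac{A+B}{2}\sin\!\frac{A-B}{2}$ to each bracket.

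The crucial observation is that the two resulting terms are both non-positive, hence they cannot cancel. Indeed, since $\mu_0$ is the smallest of the four parameters, $\mu_0\le\mu_0'$; and by the case hypothesis $\mu_1'>\mu_1$. Thus $\sin\!\frac{t(\mu_0-\mu_0')}{2}\le 0$ and $\sin\!\frac{t(\mu_1-\mu_1')}{2}\le 0$. On the other hand, because each of the averages $(\mu_0+\mu_0')/2$ and $(\mu_1+\mu_1')/2$ lies in $[0,\mu_1']$, we have $t\cdot\frac{\mu_0+\mu_0'}{2},\ t\cdot\frac{\mu_1+\mu_1'}{2}\in[0,\pi/4]$, so both cosine factors are at least $\cos(\pi/4)=1/\sqrt{2}$. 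Combining these facts,
\[
|\mathrm{Im}(h(t))|=2\cos\!\tfrac{t(\mu_0+\mu_0')}{2}\sin\!\tfrac{t(\mu_0'-\mu_0)}{2}+2\cos\!\tfrac{t(\mu_1+\mu_1')}{2}\sin\!\tfrac{t(\mu_1'-\mu_1)}{2}\ge \sqrt{2}\,\sin\!\bigl(t\delta_2/2\bigr),
\]
where I dropped the smaller of the two non-negative summands and used $\delta_2=\max(\mu_0'-\mu_0,\mu_1'-\mu_1)$.

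Finally, since $t\delta_2\le t(\mu_1'-\mu_0)\le\pi/4$, the argument $t\delta_2/2$ lies in $[0,\pi/8]$, so the elementary inequality $\sin x\ge x/2$ on $[0,\pi/2]$ yields $\sin(t\delta_2/2)\ge t\delta_2/4$. Therefore
\[
|h(t)|\ge |\mathrm{Im}(h(t))|\ge \sqrt{2}\cdot\tfrac{t\delta_2}{4}=\tfrac{t\delta_2}{2\sqrt{2}},
\]
which is the desired bound.

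The main subtlety — and the only place where the hypothesis $\mu_1'>\mu_1$ (together with $\mu_0$ being the smallest mean) is used — is ensuring that the two $\sin(\cdot)$ factors in the sum-to-product expansion share the same sign so that no cancellation occurs. Once that monotonicity is secured, the rest is a routine bound using the restricted range $[0,\pi/4]$ of the relevant angles.
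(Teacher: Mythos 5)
Your proof is correct, and it draws on the same basic toolkit as the paper's argument for this claim: bound the modulus from below by the imaginary part after translating by $\mu_0$, apply a sum-to-product identity, bound the cosine factor below by $1/\sqrt{2}$ using the $\pi/4$ range of the angles, and finish with $\sin x \ge x/2$. The difference is in how the four sine terms are grouped. The paper splits into two sub-cases according to whether $\mu_1'-\mu_1$ or $\mu_0'-\mu_0$ is larger, factors out $e^{it\mu_0}$ or $e^{it\mu_1}$ accordingly, discards the leftover non-negative sine term, and pairs the remaining two; its second sub-case requires sign bookkeeping that is delicate (the paper even asserts there that all three shifted angles are negative, which fails exactly because $\mu_1'>\mu_1$, though the intended estimate survives). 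You instead pair $(\mu_0,\mu_0')$ with $(\mu_1,\mu_1')$ symmetrically, observe that both sum-to-product terms are non-positive so no cancellation can occur, and then drop the smaller one; this eliminates the case split and treats $\delta_2=\mu_0'-\mu_0$ and $\delta_2=\mu_1'-\mu_1$ uniformly. Your range checks are all valid under the hypotheses: $t(\mu_0+\mu_0')/2$ and $t(\mu_1+\mu_1')/2$ are at most $t(\mu_1'-\mu_0)\le\pi/4$, and $t\delta_2/2\le\pi/8$, so $\sin(t\delta_2/2)\ge t\delta_2/4$ and the constant $t\delta_2/(2\sqrt{2})$ comes out exactly as claimed (you also implicitly repair the harmless typo in the hypothesis, since $t(\mu_0'-\mu_0)\le t(\mu_1'-\mu_0)$ anyway). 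Net effect: same strategy, but a cleaner decomposition that is arguably less error-prone than the paper's two-case treatment.
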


\begin{proof}
Assume that $\mu_1'-\mu_1\geq \mu_0'-\mu_0$, and recall that $\delta_2= \max(|\mu_0'-\mu_0|, |\mu_1-\mu_1'|)$. First, we factor out the lowest common exponent to see that 
$$\left|e^{it\mu_0}+e^{it\mu_1}-e^{it\mu_0'}-e^{it\mu_1'} \right| =\left|1+e^{it(\mu_1 - \mu_0)}-e^{it(\mu_0' - \mu_0)}-e^{it(\mu_1'- \mu_0)} \right|.$$
Let us denote $\phi_1=\mu_1-\mu_0$, $\phi_0'=\mu_0'-\mu_0$ and $\phi_1'=\mu_1'-\mu_0$. The following inequalities hold:
\begin{align*}
 \left|1+e^{it\phi_1}-e^{it\phi_0'}-e^{it\phi_1'} \right| 
& \ge  \left| \sin(t \phi_1)-\sin(t \phi_0')-\sin(t \phi_1') \right|  &&[ |z| \geq |\text{Im}(z)|]\\
& \ge  -\sin(t \phi_1)+\sin(t \phi_0')+\sin(t \phi_1' ) && [\text{Remove }|\cdot |]\\
& \ge  -\sin(t \phi_1)+\sin(t (\phi_1 + (\phi_1' - \phi_1))) &&  [\sin(t \phi_0') \geq 0]\\
& = 2\sin\left (t \frac{\phi_1' - \phi_1}{2} \right)\cos\left (t\left (\phi_1+\frac{\phi_1' - \phi_1}{2}\right )\right)\\
& \ge  \frac{1}{2 \sqrt{2}} t \delta_2.
\end{align*}
In the last line, we use that $\cos\left (t\left (\phi_1+\frac{\phi_1' - \phi_1}{2}\right )\right) \geq 1/\sqrt{2}$ and  $\sin\left (t \frac{\phi_1' - \phi_1}{2} \right) \geq \frac{t \delta_2}{4}$, where
the former follows from the fact that 
\begin{align*}
  0 \le  t\left (\phi_1+\frac{\phi_1' - \phi_1}{2}\right ) = \frac{t(\phi_1+ \phi_1')}{2} = \frac{1}{2}\Big(t(\mu_1-\mu_0)+t(\mu_1'-\mu_0)\Big) \le \frac{\pi}{4}
\end{align*}
and 
the latter follows from $\sin(x) \geq x/2$ for $x \in \mathbb{R}$.

If $\mu_0'-\mu_0 > \mu_1'-\mu_1$, then we can use a similar string of inequalities by using the fact that
$$\left|e^{it\mu_0}+e^{it\mu_1}-e^{it\mu_0'}-e^{it\mu_1'} \right| =\left|1+e^{it(\mu_0 - \mu_1)}-e^{it(\mu_0' - \mu_1)}-e^{it(\mu_1'- \mu_1)} \right|.$$
We denote $\phi_0=\mu_0-\mu_1$, $\phi_0'=\mu_0'-\mu_1$ and $\phi_1'=\mu_1'-\mu_1$. Note that all the $\phi$ are negative and $\phi_0 > \phi_0' > \phi_1'$. The following holds:
 
\begin{align*}
\left|e^{it\phi_0}+1-e^{it\phi_0'}-e^{it\phi_1'} \right| 
 & \geq  \left| \sin(t \phi_0)-\sin(t \phi_0')-\sin(t \phi_1') \right|  && \hspace{-7mm}[|z| \geq |\text{Re}(z)|]\\
  & =  \left| -\sin(t |\phi_0|)+\sin(t |\phi_0'|)+\sin(t |\phi_1'|) \right|  &&\hspace{-7mm}[\sin(\cdot) \text{ odd}]\\
  & =  -\sin(t |\phi_0|)+\sin(t |\phi_0'|)+\sin(t |\phi_1'|) && \hspace{-7mm} [\text{Remove }|\cdot|]    \\
  & \ge  -\sin(t |\phi_0|)+\sin(t (|\phi_0| + (|\phi_0'|- |\phi_0|))) &&  \hspace{-7mm}[\sin(t |\phi_0'|) \geq 0]\\
& = 2\sin\left(\frac{t (|\phi_0'| - |\phi_0|)}{2}\right)
\cos\left(t \left (|\phi_0|+\frac{|\phi_0'| - |\phi_0| }{2}\right )\right)\\
& \ge  \frac{1}{2 \sqrt{2}} t \delta_2.
\end{align*}
In the last line, we use that $\sin\left(\frac{t (|\phi_0'| - |\phi_0|)}{2}\right) \geq \frac{t \delta_2}{4}$ and
$\cos\left(t \left (|\phi_0|+\frac{|\phi_0'| - |\phi_0| }{2}\right )\right)\geq \frac{\sqrt{2}}{2}$.
\end{proof}

\begin{claim}\label{claim: other_factor_claim}
For $t>0$ such that $t(\mu_1-\mu_0),t(\mu_1'-\mu_0),t(\mu_1'-\mu_0) \in [0,\frac{\pi}{4}]$, if both $\mu_0',\mu_1' \in [\mu_0,\mu_1]$, then 
\begin{align*}
\left |e^{i t \mu_0} + e^{i t \mu_1} - e^{i t \mu'_0}-e^{i t \mu'_1} \right | \ge \max\left ( \frac{t^2(\delta_1-\delta_4)\delta_4}{2}, \frac{t\delta_3}{4\sqrt{2}}\right).
\end{align*}
\end{claim}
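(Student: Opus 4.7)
The plan is to reparameterize in terms of centers and half-widths: set $c := (\mu_0+\mu_1)/2$, $r := (\mu_1-\mu_0)/2$, and $c', r'$ analogously, so that $e^{it\mu_0}+e^{it\mu_1} = 2e^{itc}\cos(tr)$. Writing $g := e^{it\mu_0}+e^{it\mu_1}-e^{it\mu_0'}-e^{it\mu_1'}$, $A := \cos(tr)$, $B := \cos(tr')$, and $\alpha := t(c-c')$, a short direct computation yields
\[
|g|^2 \;=\; 4\bigl(A^2+B^2-2AB\cos\alpha\bigr) \;=\; 4\bigl[(A-B)^2 + 4AB\sin^2(\alpha/2)\bigr].
\]
Dropping each summand separately gives two clean lower bounds, $|g| \ge 2|A-B|$ and $|g|\ge 4\sqrt{AB}\,|\sin(\alpha/2)|$, and I will use them for the two terms of the $\max$ in the claim.

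For the $\delta_3$ bound I use the second inequality. The hypothesis on $t$ forces $tr, tr' \in [0,\pi/8]$, and since $\mu_0',\mu_1'\in[\mu_0,\mu_1]$ gives $|c-c'|\le \delta_1/2$, also $|\alpha/2| \le \pi/8$. Hence $A,B \ge \cos(\pi/8)$, and via the elementary inequality $\sin(x) \ge x\cos(x) \ge x\cos(\pi/8)$ on $[0,\pi/8]$ I get $|\sin(\alpha/2)|\ge \cos(\pi/8)\,|\alpha/2|$. Combined with $|\alpha|=t\delta_3/2$, this yields $|g|\ge \cos^2(\pi/8)\,t\delta_3$, which comfortably dominates $t\delta_3/(4\sqrt{2})$.

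For the $(\delta_1-\delta_4)\delta_4$ bound I use $|g| \ge 2|A-B|$. Since $\mu_0',\mu_1'\in[\mu_0,\mu_1]$ implies $r>r'$, the product-to-sum identity gives $|A-B|=B-A=2\sin(t(r+r')/2)\sin(t(r-r')/2)$. Both arguments lie in $[0,\pi/8]$, so the same sine bound yields $|g|\ge \cos^2(\pi/8)\,t^2(r+r')(r-r')$. The crux is then a purely algebraic lemma: $(r+r')(r-r')\ge \delta_4(\delta_1-\delta_4)$. To prove it, let $a:=\mu_0'-\mu_0 \ge 0$, $b:=\mu_1-\mu_1' \ge 0$, and $s:=a+b$; one checks that $r-r' = s/2$ and $r+r'=\delta_1-s/2$, so $(r+r')(r-r') = s\delta_1/2 - s^2/4$ is increasing in $s$ on $[0,\delta_1]$. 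Since $\delta_4=\min(a,b)$ forces $s\ge 2\delta_4$, and $\mu_1'\ge \mu_0'$ forces $s\le \delta_1$, the minimum occurs at $s=2\delta_4$, giving $(r+r')(r-r')\ge \delta_4(\delta_1-\delta_4)$. Combined with $\cos^2(\pi/8)>1/2$, this yields the claimed bound.

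The main technical obstacle I anticipate is keeping the constants honest. Naively applying $\sin(x)\ge(2/\pi)x$ loses a factor of roughly $\pi^2/4$ and produces only about $0.2\,t^2\delta_4(\delta_1-\delta_4)$; sharpening to $\sin(x)\ge x\cos(x)$ on $[0,\pi/8]$, and simultaneously using the tight algebraic inequality $(r+r')(r-r')\ge \delta_4(\delta_1-\delta_4)$ rather than a weaker version like $\delta_4(\delta_1-\delta_4)/2$ obtained without optimizing in $s$, is what allows the constant $1/2$ to fall out cleanly. The rest of the argument is routine trigonometry and complex-modulus manipulation.
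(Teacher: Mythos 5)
Your proof is correct, and it takes a genuinely different route from the paper's. The paper bounds the modulus by its imaginary part (for the $\delta_3$ term) or real part (for the $\delta_1\delta_4$ term) after factoring out $e^{it\mu_0}$ or $e^{it\mu_1}$, and it runs two symmetric subcases in each bound (according to whether $\mu_0'-\mu_0$ or $\mu_1-\mu_1'$ is larger), relying on three ad hoc monotonicity/Taylor facts such as $-1-\cos x+\cos y+\cos(x-y)\ge (x-y)y/2$. You instead exploit the midpoint/half-width factorization $e^{it\mu_0}+e^{it\mu_1}=2e^{itc}\cos(tr)$ to get the exact identity $|g|^2=4\bigl[(A-B)^2+4AB\sin^2(\alpha/2)\bigr]$, from which both terms of the $\max$ drop out of a single decomposition with no case analysis: the $\sin^2(\alpha/2)$ term encodes the midpoint shift $\delta_3$, and $(A-B)^2$ encodes the width shrinkage, reduced via $\cos(tr')-\cos(tr)=2\sin\!\big(\tfrac{t(r+r')}{2}\big)\sin\!\big(\tfrac{t(r-r')}{2}\big)$ and your algebraic lemma $(r+r')(r-r')\ge\delta_4(\delta_1-\delta_4)$ (whose hypotheses $2\delta_4\le s\le\delta_1$ are exactly guaranteed by the nesting $[\mu_0',\mu_1']\subseteq[\mu_0,\mu_1]$). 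All range checks work out: $tr,tr'\le\pi/8$ and $|\alpha|\le t\delta_1/2\le\pi/8$, so $\sin x\ge x\cos(\pi/8)$ applies throughout, and $\cos^2(\pi/8)=\tfrac{2+\sqrt2}{4}>\tfrac12>\tfrac{1}{4\sqrt2}$ gives constants at least as good as the paper's. The upshot of your approach is a more symmetric, casework-free argument with an exact intermediate identity; the paper's approach is more elementary term-by-term estimation but pays for it with the $\mu_0$-versus-$\mu_1$ factoring cases and separately engineered inequalities for the real and imaginary parts.
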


\begin{proof}

First, we show the left hand side of the inequality in the claim statement is at least $t \delta_3/ (4 \sqrt{2})$.

Assume that $\mu_0'-\mu_0=\delta_2$, recalling that $\delta_2= \max(|\mu_0'-\mu_0|, |\mu_1-\mu_1'|)$.
We factor out the lowest common exponent to see that 
$$\left|e^{it\mu_0}+e^{it\mu_1}-e^{it\mu_0'}-e^{it\mu_1'} \right| =\left|1+e^{it(\mu_1 - \mu_0)}-e^{it(\mu_0' - \mu_0)}-e^{it(\mu_1'- \mu_0)} \right|.$$
 Let us denote $\phi_1=\mu_1-\mu_0$, $\phi_0'=\mu_0'-\mu_0$ and $\phi_1'=\mu_1'-\mu_0$. 
 To prove following inequalities, we need two facts. We use {\bf Fact I} that $\frac{\partial}{\partial x} (\sin(x-y) - \sin(x)) = \cos(x-y) - \cos(x)\geq0$ for $\frac{\pi}{4} \ge x \ge y \ge 0$. In particular, taking $x=\phi_1$ and $y=\mu_1-\mu_1'$, the inequality is increasing with respect to $\phi_1$, so so we can lower bound the function at $\phi_1 = \mu_1-\mu_1'+\phi_0'$. Additionally, we use {\bf Fact II} that $-\sin(x+y) + 2\sin(x) \geq \sin((x-y)/2)\cos(y/2)$ for $0 \leq y \leq x \leq \pi/4$, for the choice of $x=\phi_0'$ and $y=\mu_1-\mu_1'$. Then, we have that
\begin{align*}
\left|1+e^{it\phi_1}-e^{it\phi_0'}-e^{it\phi_1'} \right| 
& \ge  \left| \sin(t \phi_1)-\sin(t \phi_0')-\sin(t \phi_1') \right|  &&[|z| \geq |\text{Im}(z)|]\\
& \ge  -\sin(t \phi_1)+\sin(t \phi_0')+\sin(t \phi_1' ) && [\text{Remove }|\cdot|]\\
& \ge  -\sin(t \phi_1)+\sin(t \phi_0')+\sin(t (\phi_1- (\mu_1-\mu_1')) ) \\
& \ge -\sin(t(\mu_1-\mu_1'+\phi_0')) + 2\sin(t\phi_0') && [\text{{\bf Fact I} above}] \\
 &\ge \sin \Big(\frac{t(-\mu_1+\mu_1'+\mu_0'-\mu_0)}{2}\Big)\cos\Big(\frac{t(\mu_1-\mu_1')}{2}\Big)  && [\text{{\bf Fact II} above}]\\
 & \ge \frac{t  \delta_3}{4 \sqrt{2}}.  && [\sin(x) \geq x/2; \\
 & && \cos(\cdot)\geq \sqrt{2}/2]
\end{align*}

Now, we assume that  $\mu_0'-\mu_0 < \mu_1-\mu_1'$ and factor out $e^{i t\mu_1}$:

$$\left|e^{it\mu_0}+e^{it\mu_1}-e^{it\mu_0'}-e^{it\mu_1'} \right| =\left|e^{it(\mu_0 - \mu_1)}+1-e^{it(\mu_0' - \mu_1)}-e^{it(\mu_1'- \mu_1)} \right|.$$
As in the proofs of other claims, we let $\phi_0=\mu_0-\mu_1$, $\phi_0'=\mu_0'-\mu_1$ and $\phi_1'=\mu_1'-\mu_1$. Then, we have

\begin{align*}
\left|e^{it\phi_0}+1-e^{it\phi_0'}-e^{it\phi_1'} \right| 
& \ge  \left| \sin(t \phi_0)-\sin(t \phi_0')-\sin(t \phi_1') \right|  &&[|z| \geq |\text{Im}(z)|]\\
& \ge  \left| -\sin(t |\phi_0|)+\sin(t |\phi_0'|)+\sin(t |\phi_1'|) \right|  &&[\sin(\cdot) \text{ odd}]\\
& \ge  -\sin(t |\phi_0|)+\sin(t |\phi_0'|)+\sin(t |\phi_1'| ) && [\text{Remove }|\cdot|]\\
& \ge  -\sin(t |\phi_0|)+\sin(t |\phi_0| - |\mu_0-\mu_0'|) +\sin(t |\phi_1'| ) \\
& \ge -\sin(t(|\phi_1'| + |\mu_0 - \mu_0'|)) + 2\sin(t|\phi_1'|) && [\text{{\bf Fact I} above}] \\
 &\ge \sin \Big(\frac{t(|\phi_1'| - |\mu_0 - \mu_0'|)}{2}\Big)\cos\Big(\frac{t|\mu_0-\mu_0'|}{2}\Big)  && [\text{{\bf Fact II} above}]\\
 & \ge \frac{t}{4 \sqrt{2}} \cdot  (|\phi_1'| - |\mu_0 - \mu_0'|)  = \frac{t}{4 \sqrt{2}} \cdot \delta_3. && [\sin(x) \geq x/2; \\
 & && \cos(\cdot)\geq \sqrt{2}/2]
\end{align*}
In the application of { \bf Fact I},  we let $|\phi_0|$ be as small as possible, choosing $|\phi_0| = |\phi_1'| + |\mu_0 - \mu_0'|$.

Next, we show the left hand side of the inequality in the claim statement is at least $t^2 (\delta_1-\delta_4)\delta_4/ 2$.
Assume that $\mu_0'-\mu_0 \le \mu_1-\mu_1'$. 
First, we factor out the lowest common exponent to see
$$\left|e^{it\mu_0}+e^{it\mu_1}-e^{it\mu_0'}-e^{it\mu_1'} \right| =\left|1+e^{it(\mu_1 - \mu_0)}-e^{it(\mu_0' - \mu_0)}-e^{it(\mu_1'- \mu_0)} \right|.$$
Again, we denote $\phi_1=\mu_1-\mu_0$, $\phi_0'=\mu_0'-\mu_0$ and $\phi_1'=\mu_1'-\mu_0$. The following holds:
 
\begin{align*}
\left|1+e^{it\phi_1}-e^{it\phi_0'}-e^{it\phi_1'} \right| &\geq 
   |\text{Re}(1+e^{it\phi_1}-e^{it\phi_0'}-e^{it\phi_1'} )| \\
 & =  \left| 1+\cos(t \phi_1)-\cos(t \phi_0')-\cos(t \phi_1') \right|  &&[ |z| \geq |\text{Re}(z)|]\\
  & \ge  -1-\cos(t \phi_1)+\cos(t \phi_0')\\
  &\quad +\cos(t (\phi_1-(\mu_1-\mu_1')))  && [\text{Remove }|\cdot|]\\
   & \ge  -1-\cos(t \phi_1)+\cos(t \phi_0')+\cos(t (\phi_1- \phi_0'))  && [\mu_1-\mu_1' \ge \phi_0']\\
   & \ge  \frac{t^2 (\phi_1-\phi_0')\phi_0'}{2} \ge \frac{t^2 (\delta_1-\delta_4)\delta_4}{2}.   &&[\text{{\bf Fact III} below}]     \\
\end{align*}
Recall that $\delta_1 = \max(|\mu_0-\mu_1|, |\mu_0'-\mu_1'|) $,and
$ \delta_4 = \min(\left| \mu_0'-\mu_0 \right|, \left| \mu_1'-\mu_1 \right|)$, so in the above, $\delta_4 = \mu_0'-\mu_0 = \phi_0'$.
The last line uses {\bf Fact III} that $-1-\cos(x )+\cos(y)+\cos(x-y)\geq (x-y)y/2$ for $0 \leq y \leq x \leq \pi/4$.

When $\mu_0'-\mu_0 > \mu_1-\mu_1'$ we use the same trick as in the previous claims and factor out $e^{it \mu_1}$ instead of $e^{i t \mu_0}$. In particular the following holds:
$$\left|e^{it\mu_0}+e^{it\mu_1}-e^{it\mu_0'}-e^{it\mu_1'} \right| =\left|e^{i t \mu_0-\mu_1}+1-e^{it(\mu_0' - \mu_1)}-e^{it(\mu_1'- \mu_1)} \right|.$$
Again, we denote $\phi_0=\mu_0-\mu_1$, $\phi_0'=\mu_0'-\mu_1$ and $\phi_1'=\mu_1'-\mu_1$. Note that all the $\phi$ are negative and $\phi_0 > \phi_0' > \phi_1'$. The following holds:
 
\begin{align*}
\left|e^{it\phi_0}+1-e^{it\phi_0'}-e^{it\phi_1'} \right| &\geq 
   |\text{Re}(e^{it\phi_0}+1-e^{it\phi_0'}-e^{it\phi_1'} )| \\
 & =  \left| \cos(t \phi_0)+1-\cos(t \phi_0')-\cos(t \phi_1') \right|  &&[|z| \geq |\text{Re}(z)|]\\
  & =  \left| \cos(t |\phi_0|)+1-\cos(t |\phi_0'|)-\cos(t |\phi_1'|) \right|  &&[\cos(\cdot) \text{ even}]\\
  & \ge  -\cos(t |\phi_0|)-1+\cos(t( |\phi_0| - |\mu_0' - \mu_0|)) &&  [\text{Remove }|\cdot|]\\ & \quad +\cos(t (|\phi_1'|))  &&\\
   & \ge  -\cos(t |\phi_0|)-1+\cos(t( |\phi_0| - |\phi_1'|))\\
   &\quad +\cos(t |\phi_1'|)  && [\mu_0'-\mu_0 \ge | \phi_1'|]\\
   & \ge  \frac{t^2 (|\phi_0|-|\phi_1'|)|\phi_1'|}{2} \ge \frac{t^2 (\delta_1-\delta_4)\delta_4}{2}.  &&     [\text{{\bf Fact III} above}]
\end{align*}
\end{proof}
\end{proof}

\subsubsection*{Proof of Lemma~\ref{lem:small_prec}}

\begin{proof}[Proof of Lemma~\ref{lem:small_prec}]
Define $\alpha$ and $\beta$ such that $\mu_0'-\mu_0 = \alpha \sigma$ and $|\mu_1-\mu_1'|= \beta \sigma$; note that by assumption $\alpha, \beta \le 2$.  For $x \in \bb{R}$, we use the notation $\widetilde x$ to denote the unique value such that $x= 2\pi k c\sigma+\widetilde{x}\sigma$, where $k \in \bb{Z}$ is a integer and $0 \leq \widetilde{x} <2\pi c$. We prove this lemma with two cases, when $\mu_1' > \mu_1$ and when $\mu_1' \leq \mu_1$. Without loss of generality, we assume that $\left|\mu_0-\mu_1\right|\ge 100 \sigma$. Also, recall our assumption on the ordering of the unknown parameters that
$\mu_0\le \min(\mu_1,\mu_0',\mu_1')$ and $\mu_0'\leq\mu_1'$.

\paragraph{Case 1 ($\mu_1'>\mu_1$):}  
Here, we will choose  $t=c\sigma$, where 
$$c= \frac{\mu_1-\mu_0}{2\pi\sigma \lfloor\frac{\mu_1-\mu_0}{80\sigma/\pi} \rfloor}.$$ 
 Then substituting in $t = 1/c\sigma$, we see that 
$$e^{itx}=  e^{it2\pi k c\sigma}e^{it\widetilde{x}\sigma}=  e^{i2\pi k }e^{i\widetilde{x}/c}=  e^{ i\widetilde{x}/c }.$$ 
From the choice of $c$ and the fact that $\lfloor x \rfloor \le x$ and $x/2 \le \lfloor x \rfloor$ for $x \ge 1$, we see $40 / \pi^2 \le c \le 80 / \pi^2$. 

As before, let $\phi_1 = \mu_1-\mu_0, \phi_0' = \mu_0'-\mu_0$ and $\phi_1' = \mu_1'-\mu_0$.
We prove that the following hold:
\begin{align*}
\widetilde{\phi}_1 &= 0 \\
  \frac{\pi^2 \alpha}{80} \le \frac{\widetilde{\phi}_0'}{c}&= \frac{\alpha}{c} \le \frac{\pi^2 \alpha}{40} \le \frac{\pi^2}{20} \\
 \frac{\pi^2 \beta}{80} \le \frac{\widetilde{\phi}_1'}{c}&= \frac{\beta}{c} \le \frac{ \pi^2 \beta}{40} \le \frac{\pi^2}{20}.
\end{align*}
We prove these statements in order. To see that $\widetilde{\phi}_1=0$, the definitions of $c$ and $\widetilde{\phi}_1$ imply that 
$\phi_1 =  2\pi\sigma k \frac{\phi_1}{2\pi\sigma\lfloor \phi_1 \pi / (80 \sigma) \rfloor} + \widetilde{\phi}_1 \sigma$, for $k = \lfloor \phi_1 \pi / (80 \sigma) \rfloor$ and $\widetilde{\phi}_1=0$. 

Next, since $\alpha \sigma = \phi_0'$, we can write $\alpha /c = 2 \pi k + \widetilde{\phi}_0' /c$, and it would follow that $\alpha/c = \widetilde{\phi}_0'/c$ if $\phi_0' < 2 \pi \sigma c = \phi_1/ \lfloor \phi_1 \pi /(80 \sigma) \rfloor$. 
Indeed this is the case, since 
$$\phi_1/ \lfloor \phi_1 \pi /(80 \sigma) \rfloor \geq 80 \sigma/ \pi >2 \sigma > \phi_0'.$$

Using the fact that $\widetilde{\phi}_1=0$, we will show $\widetilde{\phi}_1' /c = \beta/c$. 
We break up $\phi_1'$ into $\phi_1 + \mu_1'-\mu_1$, writing
$$\phi_1' = \beta \sigma + \phi_1 = \beta \sigma  + k \frac{\phi_1}{\lfloor \phi_1 \pi /(80 \sigma) \rfloor} + \widetilde{\phi}_1 \sigma=  \beta \sigma  + k \frac{\phi_1}{\lfloor \phi_1 \pi /(80 \sigma) \rfloor},$$
for $k = \lfloor \phi_1 \pi / (80 \sigma) \rfloor$. 
If $\beta < 2 \pi c$, then this choice of $k$ is correct for the definition of $\widetilde{\phi}_1'$, and it follows that $\widetilde{\phi}_1' = \beta$. This is indeed the case as $2 \pi c = \frac{\phi_1}{\sigma \lfloor  \phi_1 \pi / (80  \sigma)\rfloor} \geq 80 /\pi > \beta$.

We use our lower bounds on $\widetilde{\phi}_0'/c$ and $\widetilde{\phi}_1'/c$ and the fact that $\widetilde{\phi}_1=0$ in the following:
\begin{align*}
e^{-\frac{\sigma^2 t^2}{2}} \left |e^{i t \mu_0} + e^{i t \mu_1} - e^{i t \mu'_0}-e^{i t \mu'_1} \right | &= 
e^{-\frac{\sigma^2 t^2}{2}} \left |1+ e^{i t \phi_1} - e^{i t \phi_0'}-e^{i t \phi_1' } \right | \\
& \geq e^{-\frac{1}{2c^2}} \left| \text{Im}(1+e^{i\widetilde{\phi}_1/c}-e^{i\widetilde{\phi}_0'/c}-e^{i\widetilde{\phi}_1'/c})\right| \\
& = e^{-\frac{1}{2c^2}}  \left | \sin (\widetilde{\phi}_1/c) - \sin(\widetilde{\phi}_0'/c) - \sin(\widetilde{\phi}_1'/c)\right |\\
& = e^{-\frac{1}{2c^2}} (   \sin(\widetilde{\phi}_0'/c) + \sin(\widetilde{\phi}_1'/c))\\
&\ge e^{-1} \max(\sin (\widetilde{\phi}'_0/c), \sin (\widetilde{\phi}'_1/c)) \\ 
&\ge e^{-1} \max(\sin (\pi^2\alpha/80),\sin (\pi^2\beta/80)) \\
&\ge \frac{\pi^2 \delta_2}{160e}.
\end{align*}

\paragraph{Case 2 ($\mu_1' \leq \mu_1$):}  

First we consider the case when $\beta \leq \alpha$. Since $\mu_1-\mu_0 \ge 100\sigma$, we must have $\mu_1-\mu_0 \ge \mu_1-\mu_0-(\mu_1-\mu_1')\ge 100\sigma -2\sigma =98\sigma$. We choose $t=c\sigma$ for 
$$c= \frac{\mu_1'-\mu_0}{3\pi\sigma/2+2\pi\sigma \lfloor\frac{\mu_1'-\mu_0}{80\sigma/\pi} \rfloor}.$$
As before, for any $x \in \bb{R}$ we write $x= 2\pi k c\sigma+\widetilde{x}\sigma$ where $k \in \bb{Z}$ is a positive integer and $0<\widetilde{x} <2\pi c$. 
From the choice of $c$ and the fact that $\mu_1' - \mu_0 \geq 98 \sigma$, $\frac{25}{\pi^2} \le c \le \frac{80}{\pi^2}$.
Here we denote $\phi_1 = \mu_1-\mu_1', \phi_0' = \mu_0'-\mu_0$ and $\phi_1' = \mu_1'-\mu_0$; note this is different than our previous $\phi$ definitions.
We will show the following set of inequalities and equalities:
\begin{align*}
\frac{\widetilde{\phi}_1'}{c} &= \frac{3\pi}{2}  \\
\frac{\pi^2 \alpha}{80} \le \frac{\widetilde{\phi}_0'}{c} &= \frac{\alpha}{c} \le \frac{\pi^2 \alpha}{25} \le \frac{\pi^2}{12} \\
  \frac{\pi^2 \beta}{80} \le \frac{\widetilde{\phi}_1}{c}& = \frac{\beta}{c} \le \frac{ \pi^2 \beta}{25} \le \frac{\pi^2}{12}.
\end{align*}
To see that $\widetilde{\phi}'_1=3 \pi/2$,
observe first that $\phi_1'/ (c \sigma )= 2 \pi  k  + \widetilde{\phi}_1' /c$; then we can simplify  $\phi_1'/(c \sigma)$ and write
$
\phi_1'/(c \sigma) = 3 \pi /2 + 2 \pi \left \lfloor \phi_1' \pi/ (80 \sigma) \right \rfloor.$
Together these imply that $3 \pi /2 + 2 \pi \left \lfloor \phi_1' \pi/ (80 \sigma) \right \rfloor = 2 \pi  k  + \widetilde{\phi}_1' /c$. 
Taking $k = \left \lfloor \phi_1' \pi/ (80 \sigma) \right \rfloor $, it follows that  $\widetilde{\phi}'_1=3 \pi/2$.

Additionally, since $\alpha \sigma = \phi_0'$, we can write $\alpha /c = 2 \pi k + \tilde{\phi}_0' /c$. It follows that $\alpha/c = \tilde{\phi}_0'/c$ if 
$$\phi_0' < 2 \pi \sigma c =2 \pi \sigma \frac{\phi_1'}{3 \pi \sigma/2 + 2 \pi \sigma \lfloor \phi_1' \pi / (80 \sigma) \rfloor} 
= \frac{\phi_1'}{3 /4 +  \lfloor \phi_1' \pi /(80 \sigma )\rfloor} .$$
Indeed this is the case, since if $\lfloor \phi'_1 \pi /(80 \sigma) \rfloor < 1/4$,
$$\frac{\phi_1'}{ 3 /4 +  \lfloor \phi_1' \pi / (80 \sigma) \rfloor} > \phi_1' > \phi_0'$$
and if $\lfloor \phi'_1 \pi /(80 \sigma) \rfloor \geq 1/4$,
$$\frac{\phi_1'}{ 3 /4 +  \lfloor \phi_1' \pi / (80 \sigma) \rfloor} > \frac{\phi_1'}{  4 \lfloor \phi_1' \pi / (80 \sigma) \rfloor}> 80 \sigma /(4 \pi)  > 6 \sigma > \phi_0'.$$
A similar line of reasoning shows that $\widetilde{\phi}_1 /c = \beta/c$. Here $\beta /c = 2 \pi k + \widetilde{\phi}_1/c$, so it remains to show  
$$\phi_1 < 2 \pi \sigma c 
= \frac{\phi_1'}{3 /4 +  \lfloor \phi_1' \pi /(80 \sigma )\rfloor} .$$
Indeed this is the case, since if $\lfloor \phi'_1 \pi /(80 \sigma) \rfloor < 1/4$,
$$\frac{\phi_1'}{ 3 /4 +  \lfloor \phi_1' \pi / (80 \sigma) \rfloor} > \phi_1'  > 98 \sigma > 2 \sigma > \mu_1 - \mu_1' = \phi_1,$$
and  if $\lfloor \phi'_1 \pi /(80 \sigma) \rfloor \geq 1/4$, then
$$\frac{\phi_1'}{ 3 /4 +  \lfloor \phi_1' \pi / (80 \sigma) \rfloor} > \frac{\phi_1'}{  4 \lfloor \phi_1' \pi / (80 \sigma) \rfloor}> 80 \sigma /(4 \pi)  > 6 \sigma > \phi_1.$$

Setting $t=1/c\sigma$, the following calculation holds if 
$\beta \leq \alpha$:
\begin{align*}
e^{-\frac{\sigma^2 t^2}{2}} \left |e^{i t \mu_0} + e^{i t \mu_1} - e^{i t \mu'_0}-e^{i t \mu'_1} \right | &= 
e^{-\frac{\sigma^2 t^2}{2}} \left| 1+e^{it(\phi_1 + \phi_1')}-e^{it\phi_0'}-e^{it\phi_1'} \right| \\
 &=e^{-\frac{1}{2c^2}} \left|1+e^{i\frac{\widetilde{\phi}_1}{c}}e^{i\frac{\widetilde{\phi}_1'}{c}}-e^{i\frac{\widetilde{\phi}_0'}{c}}-e^{i\frac{\widetilde{\phi}_1'}{c}} \right| \\
& \geq e^{-\frac{1}{2c^2}} \left|\textrm{Im}(1+e^{i\frac{\widetilde{\phi}_1}{c}}e^{i\frac{\widetilde{\phi}_1'}{c}}-e^{i\frac{\widetilde{\phi}_0'}{c}}-e^{i\frac{\widetilde{\phi}_1'}{c}})\right| \\
&\ge e^{-\frac{1}{c^2}} (-1+\cos \frac{\tilde{\phi}_1}{c}+\sin \frac{\tilde{\phi}_0'}{c}) \\
&= e^{-\frac{1}{2c^2}} (-1+\cos \frac{\beta}{c}+\sin \frac{\alpha}{c}) \\
& \ge e^{-\frac{1}{2c^2}} ( \alpha/c-(\alpha/c)^3/6-(\beta/c)^2/2) \\
&\ge e^{-\frac{1}{2c^2}} (\alpha/c-(\alpha/c)^3/6-(\alpha/c)^2/2)\\
&\ge e^{-\frac{1}{2c^2}} \frac{\alpha}{3c}  
 \ge \frac{\pi^2 \delta_2}{240e}.
\end{align*}
The fourth to last inequality follows because $\sin x \ge x-\frac{x^3}{6}$ and $\cos x \ge 1- \frac{x^2}{2}$. The third to last inequality follows because $\beta \leq \alpha$ and $\alpha/c < 1$. In the final step, we re-used the fact that $\frac{25}{\pi^2} \le c \le \frac{80}{\pi^2}$.

If $\alpha < \beta$, then we can do a very similar proof by choosing 
 $t=c\sigma$ for 
$$c= \frac{\mu_1-\mu_0'}{3\pi\sigma/2+2\pi\sigma \lfloor\frac{\mu_1-\mu_0'}{80\sigma/\pi} \rfloor}.$$
From the choice of $c$ and the fact that $\mu_1 - \mu_0' \geq 98 \sigma$, $\frac{25}{\pi^2} \le c \le \frac{80}{\pi^2}$.
Here we denote $\phi_1' = \mu_1'-\mu_1, \phi_0' = \mu_0'-\mu_1$ and $\phi_0 = \mu_0- \mu_0'$.
From the same explanations as in the case when $\beta \leq \alpha$, we see that
\begin{align*}
\frac{|\widetilde{\phi}_0'|}{c} &= \frac{3\pi}{2}  \\
\frac{\pi^2 \beta}{80} \le \frac{|\widetilde{\phi}_1'|}{c} &= \frac{\beta}{c} \le \frac{\pi^2 \beta}{25} \le \frac{\pi^2}{12} \\
  \frac{\pi^2 \alpha}{80} \le \frac{|\widetilde{\phi}_0|}{c}& = \frac{\alpha}{c} \le \frac{ \pi^2 \alpha}{25} \le \frac{\pi^2}{12}.
\end{align*}

We obtain the same bound as in the case of $\beta \leq \alpha$ by factoring out $e^{i t \mu_1}$ and using a similar calculation:
\begin{align*}
e^{-\frac{\sigma^2 t^2}{2}} \left |e^{i t \mu_0} + e^{i t \mu_1} - e^{i t \mu'_0}-e^{i t \mu'_1} \right | &= 
e^{-\frac{\sigma^2 t^2}{2}} \left|e^{i t (\mu_0 - \mu_1)} + 1 - e^{i t (\mu'_0 -\mu_1)}-e^{i t (\mu'_1 - \mu_1)}\right| \\
&= 
e^{-\frac{\sigma^2 t^2}{2}} \left|e^{i t (\phi_0+\phi_0')} + 1 - e^{i t \phi_0'}-e^{i t \phi_1'}\right| \\
 &=e^{-\frac{1}{2c^2}} \left|1+e^{i\frac{\widetilde{\phi}_0}{c}}e^{i\frac{\widetilde{\phi}_0'}{c}}-e^{i\frac{\widetilde{\phi}_0'}{c}}-e^{i\frac{\widetilde{\phi}_1'}{c}} \right| \\
& \geq e^{-\frac{1}{2c^2}} \left|\textrm{Im}\left(1+e^{i\frac{-|\widetilde{\phi}_0|}{c}}e^{i\frac{-|\widetilde{\phi}_0'|}{c}}-e^{i\frac{-|\widetilde{\phi}_0'|}{c}}-e^{i\frac{-|\widetilde{\phi}_1'|}{c}}\right)\right| \\
&\ge e^{-\frac{1}{c^2}} \left(-1+\cos \frac{|\tilde{\phi}_0|}{c}+\sin \frac{|\tilde{\phi}_1'|}{c}\right) \\
&= e^{-\frac{1}{c^2}} \left(-1+\cos \frac{\alpha}{c}+\sin \frac{\beta}{c}\right),
\end{align*}
and the rest of the proof follows as before, just swapping $\alpha$ and $\beta$.
\end{proof}

\end{document}